\newcommand{\vertiii}[1]{{\left\vert\kern-0.25ex\left\vert\kern-0.25ex\left\vert #1 
    \right\vert\kern-0.25ex\right\vert\kern-0.25ex\right\vert}}
\theoremstyle{plain}
\newtheorem{teorema}{Theorem}[section]
\newtheorem{proposizione}[teorema]{Proposition}
\newtheorem{lemma}[teorema]{Lemma}
\newtheorem{corollario}[teorema]{Corollary}
\newtheorem*{theorem*}{Theorem}
\theoremstyle{definition}
\newtheorem{definizione}[teorema]{Definition}
\theoremstyle{remark}
\newtheorem{osservazione}[teorema]{Remark}
\renewcommand{\H}{\mathcal{H}}
\newcommand{\N}{\mathbb{N}}
\newcommand{\R}{\mathbb{R}}
\newcommand{\Tan}{\text{Tan}}
\newcommand{\eps}{\varepsilon}
\newcommand{\norm}{{\lVert\cdot\rVert}}
\DeclareMathOperator*{\supp}{supp}
\DeclareMathOperator*{\diam}{diam}
\DeclareMathOperator*{\dist}{dist}
\title{\normalfont\spacedallcaps{Marstrand's density theorem for arbitrary norms in the plane}} 
\author{\spacedlowsmallcaps{Giacomo Del Nin\textsuperscript{\dag}, Andrea Merlo\textsuperscript{**}}}
\date{}
\begin{document}

\renewcommand{\sectionmark}[1]{\markright{\spacedlowsmallcaps{#1}}} 
\lehead{\mbox{\llap{\small\thepage\kern1em\color{halfgray} \vline}\color{halfgray}\hspace{0.5em}\rightmark\hfil}} 
\pagestyle{scrheadings}
\maketitle 
\setcounter{tocdepth}{2}

{\let\thefootnote\relax\footnotetext{\dag \textit{Max Planck Institute for Mathematics in the Sciences, Inselstrasse 22, 04103, Leipzig, Germany.}}}
{\let\thefootnote\relax\footnotetext{** \textit{Universidad del Pa\'is Vasco (UPV/EHU), Barrio Sarriena S/N 48940 Leioa, Spain.}}}

{\rightskip 1 cm
\leftskip 1 cm
\parindent 0 pt
\footnotesize

	%
{\textsc Abstract.} We show that if a non-trivial measure in the plane admits, at almost every point, positive and finite $\alpha$-dimensional density with respect to some norm, then $\alpha$ must be an integer.

\par
\medskip\noindent
{\textsc Keywords: Marstrand, measure, density, tangent measure, norm, barycenter.} 
\par
\medskip\noindent
{\textsc MSC (2020): 28A75 (primary); 28A78, 49Q15, 46B20 (secondary).} 
\par
}

\section{Introduction}
Since Besicovitch's foundational works, one of the key problems of Geometric Measure Theory has been to determine the geometric structure of measures with density, namely Radon measures $\mu$ on $\R^n$ such that the limit 
\begin{equation}
    \lim_{r\to 0}\frac{\mu(B(x,r))}{r^\alpha}
    \label{BP}
\end{equation}
exists and is positive and finite for some $\alpha>0$ and for $\mu$-almost every $x\in \R^n$. The study of this problem was initiated by Besicovitch in his works \cite{Bes1, Besicovitch2,Besicovitch3}, which could be considered the first contributions to Geometric Measure Theory. 

The above question, usually referred to as \emph{the density problem}, drove a lot of research during the 60s and 70s, see for instance \cite{Marstrand54, Marstrandoriginal, mattila75}, and was finally solved in 1987 in the celebrated work of D. Preiss, see \cite{Preiss1987GeometryDensities}. Preiss's solution of the density problem in \emph{Euclidean spaces} heavily relies on the fact that the Euclidean metric is induced by a scalar product. The extension to more general finite dimensional Banach spaces or even to metric spaces has been since a widely open question. More recently the second-named author has been able to solve the density problem in other metric spaces like the Heisenberg groups \cite{MerloG1cod,merloMM}, in the parabolic spaces with the collaboration of M. Mourgoglou and C. Puliatti \cite{merlo2022densityproblemparabolicspace} and in general Carnot groups in the case of sets with unit density in homogeneous groups, see \cite{julia_merlo_2023}. 

Despite all this progress, in each of these results the structure of the ball is of \emph{crucial} importance. Indeed, the first step to infer structural results for such measures is to show that their dimension is integral. For instance, in the case of homogeneous groups with polynomial norm this is quite easy to check following \cite{Kirchheim2002UniformilySpaces}, see also \cite{Chousionis2015MarstrandsGroup}. However, so far, with the exception of cases in which the dimension of the measure is known a priori to be $1$, see for instance \cite{preisstiserBesicovitch}, and where the dimension $1$ allows specific tricks independently on the metric, this paper represents the first contribution in the direction of studying the density problem for every bi-Lipschitz equivalent metric on a metric space.

Our main result is a generalization to all norms in the plane of the celebrated Marstrand's density theorem:

\begin{teorema}\label{thm:main}
    Let $\norm$ be an arbitrary norm in $\R^2$, and $\alpha\ge 0$. Let $\mu$ be a non-trivial measure in $\R^2$ such that
    \[
    \lim_{r\to0}\frac{\mu(B_\norm(x,r))}{r^\alpha}
    \]
    exists positive and finite for $\mu$-almost every $x$, where $B_\norm(x,r)$ is the ball with respect to the distance induced by the norm $\norm$. Then $\alpha$ must be an integer.
\end{teorema}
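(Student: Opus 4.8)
The strategy is the classical Preiss-type scheme: if $\alpha$ is non-integer, derive a contradiction by analyzing tangent measures at a typical point. Since the only candidates to rule out in the plane are $\alpha \in (0,1)$ and $\alpha \in (1,2)$, I would treat these two regimes and show in each that the density hypothesis forces enough rigidity on the tangent measures to be incompatible with $\alpha \notin \{0,1,2\}$. First I would recall the standard consequences of the density assumption: after the usual decomposition, one may assume $\mu$ has density $\Theta_\alpha(\mu,x) = \theta$ constant (say $\theta = 1$) on a set of positive measure, and that $\mu$ is asymptotically doubling, so Preiss's tangent measure machinery applies — at $\mu$-a.e.\ $x$, every tangent measure $\nu \in \Tan(\mu,x)$ is non-trivial, satisfies $\nu(B_\norm(0,r)) = r^\alpha$ for all $r>0$ (exact density, since blowing up kills the error term), and $\Tan(\nu, y) \subseteq \Tan(\mu,x)$ for $\nu$-a.e.\ $y$. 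The exact-density property is the crucial gain: the mass of every tangent measure on every norm-ball is pinned to $r^\alpha$.

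The heart of the argument is a \emph{barycenter} computation (this is where the hypotheses "plane" and "norm" get used, and also where the title's mention of barycenter comes from). For a measure $\nu$ with $\nu(B_\norm(0,r)) = r^\alpha$, consider the vector-valued quantity $b(r) = \int_{B_\norm(0,r)} x \, d\nu(x)$, or a suitably normalized version thereof. Using the exact scaling $\nu(B_\norm(0,r)) = r^\alpha$ together with the layer-cake / coarea-type identity $b(r) = \int_0^r \big(\text{flux of } \nu \text{ through } \partial B_\norm(0,s)\big)\,ds$, one extracts a differential/functional relation for $b(r)$. The plan is to show that if $\alpha$ is non-integer this relation forces $b(r) \equiv 0$ for all $r$, i.e.\ every tangent measure is centered at the origin with respect to \emph{every} ball $B_\norm(0,r)$; but since $\Tan(\nu,y) \subseteq \Tan(\mu,x)$ at $\nu$-a.e.\ $y$, the same centering must hold for blow-ups at \emph{every} point of $\supp\nu$, not just at $0$. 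A measure all of whose tangent measures at all points have vanishing barycenter on all balls is extremely rigid — one shows its support must be contained in a line (when $\alpha<1$) forcing $\alpha=1$, or be all of $\R^2$ up to an affine structure (when $1<\alpha<2$) forcing $\alpha=2$, in either case contradicting $\alpha \notin \mathbb Z$. The norm enters because the geometry of $\partial B_\norm$ — in particular, whether it is $C^1$ or has corners, and the relation between the tangent line to $\partial B_\norm$ and the position vector — controls the flux integral; the planarity is what makes the flux one-dimensional and hence analyzable.

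The main obstacle, and where I expect the real work to lie, is handling \emph{non-smooth norms}: the boundary $\partial B_\norm$ may have corners and flat segments, so "flux through $\partial B_\norm(0,s)$" is not a naive surface integral and one cannot differentiate in $r$ cavalierly. I would deal with this by first proving the result for polygonal norms (where $\partial B_\norm$ is a finite union of segments and the flux computation reduces to a finite combinatorial sum over edges, with the position vector having locally constant "outer normal" data), establishing the centering/rigidity dichotomy there, and then passing to a general norm by approximation: a general tangent measure's behaviour is controlled by the fact that density is \emph{exactly} $r^\alpha$ for the given norm, and one localizes near points where $\partial B_\norm$ is differentiable (which is $\H^1$-a.e.\ point, by convexity) to run the flux argument on smooth arcs. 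A secondary technical point is ensuring tangent measures exist and are non-degenerate — this is standard once asymptotic doubling is in hand, but with an arbitrary norm one should double-check the covering/differentiation lemmas (Besicovitch-type) still apply, which they do since all norms on $\R^2$ are bi-Lipschitz equivalent to the Euclidean one and the needed statements are bi-Lipschitz stable. The cleanest route, and the one I would pursue, is: (i) reduce to exact-density tangent measures; (ii) prove the barycenter vanishes on all balls when $\alpha \notin \mathbb Z$; (iii) upgrade vanishing-barycenter-at-all-scales-and-points to a structure theorem forcing $\supp\mu$ into a linear subspace of the appropriate dimension; (iv) conclude $\alpha$ equals that dimension.
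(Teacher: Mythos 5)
Your overall frame (reduce to $\alpha$-uniform tangent measures, show a barycenter vanishes, deduce flatness of the support, contradict non-integrality) matches the paper's skeleton, but the two steps where the norm actually matters are precisely the ones your proposal leaves as black boxes, and both would fail as you have set them up.

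First, the barycenter itself. You write $b(r)=\int_{B_\norm(0,r)} x\,d\nu(x)$, i.e.\ the \emph{linear} barycenter. For a general norm, knowing that the linear barycenter vanishes on every ball does \emph{not} let you conclude that $\supp\nu$ is flat: the touching argument gives you a half-plane constraint $\supp\nu\subseteq\{\langle y,e\rangle\ge 0\}$, and in the Euclidean case $b(r)\cdot e=0$ forces $\supp\nu\subseteq e^\perp$ because the integrand $\langle y,e\rangle$ is nonnegative on $\supp\nu$. That argument uses that the linear barycenter is the gradient of $\tfrac12\|\cdot\|_2^2$. The paper replaces the linear barycenter by the \emph{nonlinear} one $b_\mu(r)=\tfrac{1}{r^\alpha}\int_{B(0,r)}\nabla\lVert\cdot\rVert^2(z)\,d\mu(z)$, and the vanishing is deduced not by a flux/coarea identity (which is problematic exactly because $\partial B_\norm$ may have corners, as you note, and also because $\nu$ need not be absolutely continuous along spheres) but from the polarization $V(z,y)=\tfrac12(\lVert z\rVert^2+\lVert y\rVert^2-\lVert z-y\rVert^2)$: one shows by a shifting-balls computation that $\bigl|\tfrac{1}{r^\alpha}\int_{B(0,r)}V(z,y)\,d\mu\bigr|\le C\lVert y\rVert^2$ for $y\in\supp\mu$, then passes to tangent measures using the Taylor expansion $V(z,y)\approx\tfrac12\nabla\lVert\cdot\rVert^2(z)\cdot y$. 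Your proposal never isolates why $\nabla\lVert\cdot\rVert^2$ (rather than the identity) is the right vector field, nor how to make the heuristic expansion rigorous.

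Second and more seriously, the passage from ``vanishing barycenter plus half-plane'' to ``flat support'' is exactly the point where non-Euclidean norms resist, and your proposal only asserts it. The needed implication is: if $\langle y,e\rangle>0$ then $\langle\nabla\lVert y\rVert^2,e\rangle>0$. This is \emph{false} for a general norm (indeed, having it for every direction $e$ characterizes ellipsoids). The paper's central lemma is that in the plane, after an arbitrary linear change of variables, one can always produce at least \emph{two} independent directions of strict monotonicity, and the touching argument is then run with \emph{parallelograms} whose sides are orthogonal to those directions rather than with balls. There is also a genuinely non-trivial sub-case you do not anticipate: the touching parallelograms may always meet $\supp\mu$ only at their vertices, in which case one must extract a Lipschitz graph inside the support and blow up at a differentiability point of that graph. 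Without these ingredients the deduction ``vanishing barycenter implies flatness'' simply does not go through.

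Finally, the polygonal-approximation route you suggest is not viable in this problem: if $\nu$ is $\alpha$-uniform for $\norm$, it is not approximately uniform for a sequence of polygonal norms converging to $\norm$; the exact identity $\nu(B_\norm(0,r))=r^\alpha$ is unstable under perturbations of the norm, so you cannot transfer the rigidity from the polygonal case. The paper instead handles non-smoothness of $\norm$ directly: the non-differentiability set of a planar norm is a countable union of lines, so if $\mu$ charged it you could blow up at a density point on one such line and immediately get a measure supported on a line, a contradiction; otherwise $\norm$ is differentiable $\mu$-a.e.\ and the barycenter is well-defined. Also a small point: for $0<\alpha<1$ you do not need the barycenter machinery at all — a simple annulus-packing argument in any metric space rules out non-trivial $\alpha$-uniform measures there, which is how the paper dispenses with that range.
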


\subsection{The classical strategy}
A modern approach to the proof of Marstrand's density theorem is based on a contradiction argument (see e.g. \cite{DeLellis2008RectifiableMeasures}): suppose that there exists some non-trivial measure in $\R^n$ having non-integer $\alpha$-density almost everywhere, and consider the minimal ambient dimension $n$ where this happens. Then by taking suitable \textit{tangent measures} one can find a measure with the same property but supported on a hyperplane, thus contradicting the minimality.

The contradiction argument is based on two fundamental steps, which we are going to summarize below: a \textit{touching point argument} and the \textit{barycenter decay}.\\

\textit{Touching point argument.} Let $\mu$ be $\alpha$-uniform in $\R^n$, with $\alpha<n$. Then $\supp(\mu)$ has empty interior, hence for a point $x$ in its complement we can consider a maximal ball $B(x,r)$ in the complement, so that its boundary intersects $\supp(\mu)$ in at least one point $z$. By taking a tangent measure $\nu\in \Tan(\mu,z)$ we end up with an $\alpha$-uniform measure that in addition satisfies
\begin{equation}\label{eq:touching_point_intro}
\supp(\nu)\subseteq \{y\in\R^n:\, y\cdot e\ge 0\}\qquad\text{and}\qquad 0\in\supp(\nu),
\end{equation}
where $e=z-x$.\\

\textit{Barycenter decay.} Given an $\alpha$-uniform measure $\mu$, with $0\in\supp(\mu)$, let the barycenter function be defined by
\[
b_\mu(r):=\fint_{B(0,r)} zd\mu(z).
\]
The decay estimate consists in showing that 
\begin{equation}\label{eq:decay_intro}
|b_\mu(r)\cdot y|\le C |y|^2 \qquad\text{for every $y\in \supp(\mu)\cap B(0,r)$}.
\end{equation}
The power of this estimate is in the different scaling of the two sides: a priori we expect the quantity on the left-hand side to decay like $r|y|$, while instead the right-hand side decays like $|y|^2$. Considering very small $y$, or rather, fixing $y$ and zooming in around zero, this very strong information allows us to conclude that for every tangent measure $\nu\in\Tan_\alpha(\mu,0)$ it holds
\begin{equation}\label{eq:barycenter_zero_intro}
b_\nu(\rho)=0\quad\text{for every $\rho>0$}.
\end{equation}

Putting together \eqref{eq:barycenter_zero_intro} and \eqref{eq:touching_point_intro} we discover that $\supp(\nu)$ is actually contained in the hyperplane $\{y\in\R^n:\, y\cdot e=0\}$, and this concludes the argument.

We remark that estimate \eqref{eq:barycenter_zero_intro} is based on the polarization identity:
\[
2x\cdot y=|x|^2+|y|^2-|x-y|^2.
\]
Indeed, thanks to a shifting-ball argument one is able to conclude that 
\[
\left|\fint_{B(0,r)} |x|^2-|x-y|^2 d\mu\right|\le C|y|^2
\]
and this is enough to show \eqref{eq:decay_intro}.

\subsection{The updated strategy}
As mentioned above, estimate \eqref{eq:barycenter_zero_intro} relies crucially on the polarization identity, which famously characterizes inner product norms and hence is not directly applicable to other norms. However, the key observation is to view the polarization identity as the \textit{Taylor expansion} of the squared Euclidean norm $f(x)=|x|^2$ around the point $x$:
\[
\underbrace{|x-y|^2}_{f(x-y)}=\underbrace{|x|^2}_{f(x)}-\underbrace{2x\cdot y}_{\nabla f(x)\cdot(- y)}-\underbrace{|y|^2}_{o(|y|)}.
\]
The first modification is working with the polarization $V(x,y):=\tfrac12(\|x\|^2+\|y\|^2-\|x-y\|^2)$ as a replacement for the scalar product. Using a shifting-balls argument we show a quadratic decay for the polarization, which will be the replacement for \eqref{eq:decay_intro}:
\[
\left|\fint_{B(0,r)} V(z,y)d\mu(z)\right|\le C(\alpha)\|y\|^2\qquad\text{for every $y\in\supp(\mu)\cap B(0,r)$}.
\]
Secondly, using the Taylor expansion for any norm $\norm$, we discover that when $y$ is very small
\[
\fint_{B(0,r)} (\|x\|^2-\|x-y\|^2) d\mu(x) \approx \fint_{B(0,r)}\nabla\|x\|^2\cdot y \,d\mu(x).
\]
Observe that the points of non-differentiability of $\norm$ are always an $(n-1)$-rectifiable set (being the function convex), hence we can assume without loss of generality that $\norm$ is differentiable $\mu$-almost everywhere, and that $\mu$ admits such a Taylor expansion at $\mu$-almost every point. Indeed, otherwise we could take a tangent measure at a density point of the $(n-1)$-rectifiable set admitting a weak tangent, and find at least one tangent measure supported on a hyperplane, which would be a contradiction to the minimality of the ambient space, see Step (ii) in Section \ref{sec:proof}. 
We deduce that for any tangent measure $\nu\in \Tan(\mu,0)$ the \textit{nonlinear barycenters}
\[
b_\nu(\rho):=\fint_{B(0,\rho)} \nabla\|z\|^2\,d\nu(z)
\]
vanish for every $\rho>0$. This concludes the first step.

Observe now that, even assuming \eqref{eq:touching_point_intro} for some $e$, this might not be enough to conclude that $\supp(\nu)\subseteq\{y\in\R^n:\, y\cdot e=0\}$. Indeed, in the case of the Euclidean norm this conclusion is based on the implication
\begin{equation}\label{eq:monotonicity_intro}
y\cdot e>0 \implies \nabla\|y\|^2 \cdot e>0,
\end{equation}
which is not necessarily true for an arbitrary norm (actually, having this property for \textit{every} direction $e$ characterizes the Euclidean norm, see Remark \ref{rmk:ellipsoid}
). Nevertheless, we can show that every norm in the plane admits (up to a linear change of variables, which does not affect the problem) at least \textit{two} such good directions $v,w$ for which \eqref{eq:monotonicity_intro} holds. We would thus be able to conclude the argument if we could find touching points in either of these directions. In order to do this we run the touching point argument, but instead of using balls we use \textit{parallelograms} whose sides are orthogonal to $v$ and $w$ (observe that we can run the touching point argument with any given compact shape, there is no need for a relation with the balls of the norm $\norm$). In this way if we are able to find a touching point in the interior of one of the sides of the parallelogram we can reach the conclusion by taking a tangent measure, which will satisfy \eqref{eq:touching_point_intro} with $e$ equal to either $v$ or $w$, and using \eqref{eq:monotonicity_intro}. The only case that remains to analyze is if \textit{every} touching parallelogram touches the support of $\mu$ in one of the vertices. In this case, however, we are able to show that there must be a $1$-rectifiable piece in the support of $\mu$, hence we find anyway a tangent measure supported on a line, again a contradiction.

\subsection{Remarks}
We make some final remarks about the possibility of extending the argument to higher dimensions. The conclusion of the first step, namely finding an $\alpha$-uniform measure $\nu$ such that $b_\nu(\rho)=0$ for every $\rho>0$, works the same in any dimension. Obtaining the second step is more challenging, and in particular it is not clear whether, up to a linear change of variables, the implication \eqref{eq:monotonicity_intro} can hold for sufficiently many directions (e.g., for a spanning set of directions).  Even assuming this property however, there is one more case to consider: for example, in $\R^3$ if the touching parallelepipeds always touch the support of the measure along one of the 1-dimensional edges, an immediate adaptation of our planar argument does not imply that the support must contain a 2-rectifiable piece. Despite this, we suspect that a more careful adaptation should yield the needed $2$-rectifiable set in the support. Further work is thus needed to pass from the plane to higher dimensional Banach spaces. 


\subsection{Plan of the paper} In Section \ref{sec:prelims} we present some preliminaries about norms and their differentiability, uniform and tangent measures. In Section \ref{sec:barycenters} we introduce the non-linear barycenters, deduce the quadratic decay of the polarization average (see Lemma \ref{lemma:quadratic_decay}) and conclude the vanishing of the barycenters (Proposition \ref{prop:barycenter_zero} and Corollary \ref{cor:barycenter_zero}). In Section \ref{sec:touching_point} we prove some monotonicity properties of norms and run the touching point argument. Finally, in Section \ref{sec:proof} we prove Theorem \ref{thm:main}.

\section{Preliminaries}\label{sec:prelims}

\subsection{Cones}
Given $x\in\R^n$, a linear subspace $V$ and $M>0$ we define the \emph{bilateral} cone about $V$, centered at $x$, with aperture $M$ as
\begin{equation}
    \begin{split}
    X(x,V,M):=\{y:\, |\pi_{V^\perp}(y-x)|\le M |\pi_V(y-x)|\}.
    \nonumber
    \end{split}
\end{equation}
In addition, if $e\in \R^n$, we let $C(x,e,M)$ be the \emph{directional} cone centered at $x$ of axis $e$ aperture $M$ as
\begin{equation}
    \begin{split}
    C(x,e,M):&=\{y:\, |\pi_{e^\perp}(y-x)|\le M \langle e,y-x\rangle\}\\
    &= X(x,\mathrm{span}(e),M)\cap \{y:\langle e,y-x\rangle\geq 0\}.
    \nonumber
    \end{split}
\end{equation}
We also set $C(e,M):=C(0,e,M)$.

\subsection{Norms}
Throughout this paper, we denote by \(\lVert \cdot \rVert\) a norm on \(\mathbb{R}^n\) and by \(B_{\lVert \cdot \rVert}(x,r)\) the metric ball centered at \(x\) with radius \(r\), defined with respect to the distance induced by this norm. When the specific norm is clear from context, we will omit the explicit reference to \(\lVert \cdot \rVert\) in our notation, writing simply $B(x,r)$.

\begin{lemma}\label{lemma:non-differentiability}
    Let $\lVert\cdot\rVert$ be any norm in $\R^2$. Then the set $N$ of non-differentiability of $\lVert\cdot\rVert$ coincides with a (at most) countable union of lines through the origin.
\end{lemma}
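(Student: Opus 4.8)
The plan is to reduce the lemma to the classical fact that a planar convex body has at most countably many corner points, using the homogeneity and symmetry of the norm to organise $N$ into rays and then into lines. First I would record some elementary consequences of the algebraic structure: by a rescaling argument based on $\|\lambda x\|=\lambda\|x\|$ for $\lambda>0$, the norm is differentiable at $x\neq 0$ if and only if it is differentiable at every point of the ray $\R_{>0}x$ (with the same gradient); since $\|-x\|=\|x\|$, differentiability at $x$ is also equivalent to differentiability at $-x$; and $\|\cdot\|$ is never differentiable at $0$, because a linear first-order term $L$ there would force $\|v\|=L(v)=-L(-v)=-L(v)$ for every $v$. Hence $0\in N$ and $N$ is a cone invariant under $x\mapsto-x$, and the whole statement reduces to showing that the set $S:=N\cap\partial K$ of non-differentiability directions on the unit sphere $\partial K=\{\|\cdot\|=1\}$ is at most countable: once this is known, $N=\{0\}\cup\bigcup_{x\in S}\R_{>0}x$, and since $S=-S$ these rays pair up into an at most countable union of lines through the origin (with the degenerate case $S=\emptyset$ giving simply $N=\{0\}$).

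Next I would identify $S$ with the set of corners of the convex body $K=\{\|\cdot\|\le 1\}$. Testing the subgradient inequality for $\|\cdot\|$ at the points $0$ and $2x$ shows that, for $x\in\partial K$, the subdifferential $\partial\|\cdot\|(x)$ is exactly the set of linear functionals $\xi$ with $\xi(x)=1$ and $\xi\le 1$ on $K$, that is, the functionals whose level line $\{\xi=1\}$ supports $K$ at $x$. Since a convex function is differentiable at a point precisely when its subdifferential there is a singleton, $\|\cdot\|$ is differentiable at $x\in\partial K$ exactly when $K$ has a single supporting line at $x$, equivalently when the normal cone $N_K(x)$ is a single ray rather than a two-dimensional wedge. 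Thus $S$ is the set of boundary points of $K$ at which $N_K(x)$ is two-dimensional.

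Finally I would show that such points are at most countable, which is the step I expect to require the most care. The key claim is that the two-dimensional cones $N_K(x)$, $x\in S$, have pairwise disjoint interiors. Indeed, if $\xi\in\mathrm{int}\,N_K(x)$ then $\xi$ attains its maximum over $K$ only at $x$: otherwise $\partial K$ would contain a nontrivial segment $[x,w]$ with $w\neq x$, the vector $w-x$ would be orthogonal to $\xi$ (since $\xi$ is an outer normal of $K$ at both endpoints of that segment), and the perturbation $\xi+\delta(w-x)\in N_K(x)$, available for small $\delta>0$ because $\xi$ is interior, would satisfy $\langle\xi+\delta(w-x),w-x\rangle=\delta|w-x|^2>0$, contradicting that it is an outer normal at $x$. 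Consequently no $\xi$ can lie in the interior of the normal cones of two distinct boundary points. Intersecting these two-dimensional cones with the Euclidean unit circle then produces a pairwise disjoint family of arcs of positive length in $S^1$, which is necessarily at most countable since the total length is at most $2\pi$. Therefore $S$ is at most countable, and the lemma follows; the rescaling argument, the subdifferential computation, and the degenerate case $S=\emptyset$ are all routine.
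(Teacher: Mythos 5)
Your proof is correct, but it takes a genuinely different route from the paper's. The paper's proof also starts from the observation that $1$-homogeneity makes $N$ dilation invariant (hence a union of half-lines) and that central symmetry pairs these into lines; but to bound the number of half-lines it invokes Zaj\'{\i}\v{c}ek's theorem (cited as \cite[Theorem 4.1]{zbMATH00219912}) that the non-differentiability set of a convex function in $\R^n$ is countably $\mathcal{H}^{n-1}$-rectifiable, and then notes that $\sigma$-finiteness of $\mathcal{H}^1\llcorner N$ forbids uncountably many disjoint segments in an annulus. You instead reduce to the classical fact that a planar convex body $K$ has at most countably many corners, and you prove it from scratch: the subdifferential of $\norm$ at $x\in\partial K$ is exactly the set of supporting functionals, so non-differentiability means the normal cone $N_K(x)$ is two-dimensional, and a clean perturbation argument (using the existence of a flat facet $[x,w]$ and the shift $\xi\mapsto\xi+\delta(w-x)$) shows that the interiors of these cones are pairwise disjoint, hence cut $S^1$ in pairwise disjoint arcs of positive length, of which there can only be countably many. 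Your argument is more elementary and self-contained --- it avoids the rectifiability machinery entirely --- but it is genuinely two-dimensional: the step ``subdifferential not a singleton $\Rightarrow$ normal cone has nonempty interior'' fails for $n\ge3$, where lower-dimensional normal cones also signal non-differentiability. The paper's route is heavier on citations but the key theorem it uses is dimension-free, which is consistent with the paper's own remarks about possible extensions beyond the plane.
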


\begin{proof}
Since $\lVert\cdot\rVert$ is a $1$-homogeneous function, $N$ must be dilation invariant.
Hence, $N$ can be written as a union of half lines that intersect only at the origin. Notice that this implies that $N\cap B(0,1)\setminus B(0,1/2)$ consists of a disjoint union of segments.
Thanks to \cite[Theorem 4.1]{zbMATH00219912}, we know that the set $N$ is countably $\mathcal{H}^1$-rectifiable. This implies that $\mathcal{H}^1\llcorner N$ is thus $\sigma$-finite and this in turn shows that there are only countably many segments in $N\cap B(0,1)\setminus B(0,1/2)$. This proves that the set $N$ is at most the disjoint union of countably many half lines through the origin. The fact that $\lVert \cdot\rVert$ is symmetric with respect to $0$ concludes the proof. 
\end{proof}

We recall for the reader's convenience the definition of the subdifferential of a convex function.

\begin{definizione}[Subdifferential]
    Let $f:\R^n\to \R$ be a convex function. We let $\partial f(x)$ be the \emph{subdifferential} of $f$ at $x$ the family of $v\in \R^n$ such that 
$$f(y)\geq f(x)+\langle v,y-x\rangle\qquad\text{ for every }y\in\R^n.$$
\end{definizione}

\begin{osservazione}
    Recall that the elements of the subdifferential satisfy the following inequality
\begin{equation}\label{eq:monotonicity_subdifferential}
    \langle v-w,x-y\rangle\ge 0\qquad\text{for every $x,y\in\R^n$, $v \in\partial f(x)$, $w \in\partial f(y)$.}
    \nonumber
\end{equation}    
\end{osservazione}

\begin{lemma}\label{lemma:quantitative_monotonicity}
    Let $\norm$ be a norm in $\R^n$. There exist $\delta>0$ and $\sigma>0$ such that for every $\nu\in\partial B(0,1)$
    \[
     \text{for every $x\in C(
     \nu,\sigma)\cap \partial B(0,1)$ and for every $v\in \partial (\lVert \cdot\rVert)(x)$ it holds }\quad v\cdot\nu \ge \delta.
    \]
\end{lemma}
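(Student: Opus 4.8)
\noindent\textit{Proof sketch.} The plan is to argue by compactness, using two elementary facts about the subdifferential of a norm $\norm$ on $\R^n$. First, if $v\in\partial(\norm)(x)$, then testing the subgradient inequality against $y=\lambda x$ for $\lambda\in(0,\infty)$ and invoking the $1$-homogeneity of $\norm$ gives the Euler-type identity $\langle v,x\rangle=\|x\|$; in particular $\langle v,x\rangle=1$ whenever $x\in\partial B(0,1)$ and $v\in\partial(\norm)(x)$. Second, since $\norm$ is Lipschitz with respect to the Euclidean norm, the subgradients are uniformly bounded, say $|v|\le C$ for all $x$ and all $v\in\partial(\norm)(x)$, and the graph $\{(x,v):\, v\in\partial(\norm)(x)\}$ is closed (pass to the limit in the subgradient inequality, using continuity of $\norm$). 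Hence $\{(x,v):\, x\in\partial B(0,1),\ v\in\partial(\norm)(x)\}$ is compact, because $\partial B(0,1)$ is compact.

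Suppose, for contradiction, that the statement is false. Then for each $k\in\N$, applying the negation with $\delta=\sigma=1/k$, we obtain $\nu_k, x_k\in\partial B(0,1)$ with $x_k\in C(\nu_k,1/k)$, and $v_k\in\partial(\norm)(x_k)$, such that $\langle v_k,\nu_k\rangle<1/k$. Passing to a subsequence, assume $\nu_k\to\nu$, $x_k\to x$, and $v_k\to v$, with $\nu,x\in\partial B(0,1)$ and, by closedness of the graph, $v\in\partial(\norm)(x)$. The cone membership reads $|\pi_{\nu_k^\perp}(x_k)|\le\tfrac1k\langle\nu_k,x_k\rangle$; since $\partial B(0,1)$ is bounded in the Euclidean norm, the right-hand side is $O(1/k)$, so letting $k\to\infty$ we get $\pi_{\nu^\perp}(x)=0$, i.e. $x=t\nu$ for some $t\ge 0$ (the sign because $\langle\nu_k,x_k\rangle\ge 0$). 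From $1=\|x\|=t\|\nu\|=t$ we conclude $x=\nu$. Therefore $v\in\partial(\norm)(\nu)$ and $\langle v,\nu\rangle=\|\nu\|=1$, while $\langle v_k,\nu_k\rangle\to\langle v,\nu\rangle$, contradicting $\langle v_k,\nu_k\rangle<1/k\to 0$.

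The only point that requires a little care is the convergence $x_k\to\nu$: one must use that $x_k$ lies on the unit sphere, not merely that $\pi_{\nu_k^\perp}(x_k)\to 0$. Lying in a thin cone forces $x_k$ to be close to the ray $\R_{\ge 0}\nu_k$, and intersecting with $\partial B(0,1)$ then pins $x_k$ near $\nu_k$. If one prefers explicit constants, the same reasoning can be made quantitative: writing $x=t\nu+\pi_{\nu^\perp}(x)$ with $t=\langle x,\nu\rangle/|\nu|^2$, the cone condition together with $\bigl|\,\|x\|-\|t\nu\|\,\bigr|\le L\,|\pi_{\nu^\perp}(x)|$ yields $|1-t|=O(\sigma)$ and hence $|x-\nu|=O(\sigma)$; combined with $\langle v,x\rangle=1$ and $|v|\le C$ this gives $\langle v,\nu\rangle\ge 1-O(\sigma)$, so any $\sigma$ small enough to make the error term below $\tfrac12$, together with $\delta=\tfrac12$, works.
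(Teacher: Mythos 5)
Your proof is correct, and it departs from the paper's argument in an instructive way. Both proofs begin with the same compactness setup: negating the statement produces sequences $\nu_k,x_k\in\partial B(0,1)$ with $x_k\in C(\nu_k,1/k)$ and $v_k\in\partial(\norm)(x_k)$ with $\langle v_k,\nu_k\rangle\to 0$, then one extracts limits $\nu_k\to\nu$, $x_k\to x=\nu$, $v_k\to v\in\partial(\norm)(\nu)$ using closedness of the subdifferential graph and uniform boundedness. Where you part ways is in how the contradiction is extracted. The paper works geometrically: from $\langle v,\nu\rangle\le 0$ it deduces that the supporting half-space $H_v^-$ at $x=\nu$ contains $B(0,1)$, then invokes the central symmetry of the unit ball to put $B(0,1)$ also in $H_v^+$, forcing the ball into the degenerate hyperplane $v^\perp$, impossible. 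You instead invoke the Euler-type identity for a positively $1$-homogeneous convex function, $\langle v,x\rangle=\|x\|$ for all $v\in\partial(\norm)(x)$, which is immediate by testing the subgradient inequality along the ray $\lambda x$, $\lambda>0$. On the unit sphere this gives $\langle v,x\rangle=1$, so once $x=\nu$ is established you get $\langle v,\nu\rangle=1$ outright, directly contradicting $\langle v_k,\nu_k\rangle\to 0$. Your route is shorter and avoids the geometric case analysis and the appeal to symmetry (it would work unchanged for an asymmetric convex gauge, for instance). The quantitative version you sketch at the end, giving $\langle v,\nu\rangle\ge 1-O(\sigma)$ and hence explicit $\delta,\sigma$, is a further improvement over the purely qualitative compactness argument. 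Both proofs are sound; yours is a streamlining.
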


\begin{proof}
Suppose by contradiction that this is not the case and that there exists a sequence of directions $\nu_i\in\partial B(0,1)$ for which there exists $x_i\in C(\nu_i,1/i)\cap\partial B(0,1)$ and there exists $v_i\in \partial (\lVert \cdot\rVert)(x_i)$ such that 
\begin{equation}
    \nu_i \cdot v_i\leq 1/i.
    \label{contradicip}
\end{equation}
By compactness of $\partial B(0,1)$ and the $1$-homogeneity of $\lVert \cdot\rVert$, we conclude that
$$\sup_{z\in \partial B(0,1)}\diam(\partial (\lVert \cdot\rVert)(z))<\infty\qquad \text{and}\qquad\inf_{z\in \partial B(0,1)} \mathrm{dist}(0,\partial (\lVert \cdot\rVert)(z))>0.$$
Since $\partial(\lVert \cdot\rVert)(x_i)$ are compact sets uniformly away from $0$ and contained in some bigger compact set, this shows that there exists $\nu\in \mathbb{S}^{1}$, $x\in \partial B(0,1)$ and a  $v\in \R^2\setminus \{0\}$ such that 
$$\nu_i\to \nu,\qquad x_i\to x= \nu\qquad\text{and}\qquad v_i\to v.$$
Let us check that $v\in \partial(\lVert \cdot\rVert)(x)$. Indeed, for every $q\in \R^2$ we have that
$$\lVert q\rVert\geq \lVert x_i\rVert+\langle v_i,q-x_i\rangle\qquad \text{for every }i\in\N.$$
Taking the limit as $i\to \infty$ this immediately shows that $0\neq v\in \partial(\lVert \cdot\rVert)(x)$ by the very definition of subdifferential. However \eqref{contradicip} shows that 
$\nu \cdot v\leq 0$. Notice that $B(0,1)\subseteq x+H^-_v$. Since $0\in B(0,1)$ this shows that $\langle v, \nu\rangle=0$ , indeed by definition of $H_v^-$
\[
0\le-\langle \nu,v\rangle=\langle 0-x,v\rangle\le 0
\]
Note that this shows that $B(0,1)\subseteq x+H^-_v=H^-_v$ as $x=\nu\perp v$. 
Thanks to the central symmetry of $B(0,1)$, we also infer that $-v\in \partial(\lVert\cdot\rVert)(-x)$ and thus $B(0,1)\subseteq H^-_{-v}=H^+_v$. This however proves that $B(0,1)\subseteq v^\perp$ which is not possible. This results in a contradiction and the result is proved.
\end{proof}

\subsection{Density, tangent and uniform measures}

\begin{definizione}
Let \(\alpha > 0\) be fixed. A Radon measure \(\phi\) on \(\mathbb{R}^n\) is said to have \((\alpha,\lVert\cdot\rVert)\)-\emph{density} if the limit
\[
\Theta^\alpha(\phi,x) := \lim_{r\to 0} \frac{\phi(B_{\lVert \cdot \rVert}(x,r))}{r^\alpha}
\]
exists and is both finite and nonzero for \(\phi\)-almost every \(x \in \mathbb{R}^n\).
\end{definizione}

\begin{osservazione}
In what follows, we denote by \(\mu_k \rightharpoonup \mu\) the weak-* convergence of the measures \(\mu_k\) to the measure \(\mu\).
\end{osservazione}

\begin{definizione}[Tangent measures]
Let \(\phi\) be a Radon measure on \(\mathbb{R}^n\) with \((\alpha, \lVert \cdot \rVert)\)-density. For any \(x \in \mathbb{R}^n\) and any \(r > 0\), define the measure \(T_{x,r}\phi\) by
\[
T_{x,r}\phi(A) = \phi(x + rA)
\]
for every Borel set \(A \subseteq \mathbb{R}^n\). The set of tangent measures to \(\phi\) at \(x\), denoted by \(\Tan_\alpha(\phi,x)\), consists of those Radon measures \(\mu\) for which there exists a sequence \(r_i \to 0\) such that
\[
r_i^{-\alpha} T_{x,r_i}\phi \rightharpoonup \mu.
\]
\end{definizione}

\begin{osservazione}
    It is well known that $\Tan_\alpha(\phi,x)$ is non empty for $\phi$-almost every $x\in \R^n$, see \cite[Proposition 3.4]{DeLellis2008RectifiableMeasures}.
\end{osservazione}

\begin{osservazione}\label{changeofvariables}
    Recall that given a Borel map $g:\R^n\to \R^n$, we let the push-forward of a Radon measure $\mu$ under $g$ be the measure $g_\#\mu$ that acts as 
    $$g_\#\mu(A)=\mu(g^{-1}(A))\qquad\text{for every Borel set }A.$$
    In the specific case when $g(\cdot)=x+ r_i\cdot$ we will always denote $g_\#\mu:=T_{x,r_i}\mu$. A simple computation further shows that the following change of variable formula holds
\[
\int f(z)d\mu(z)=\int f(y r_i)d (T_{0,r_i}\mu)(y),
\]
for every $\mu$-measurable function $f$.
\end{osservazione}

\begin{definizione}[Uniform measures]\label{uniform}
A Radon measure \(\mu\) is called an \((\alpha, \lVert \cdot \rVert)\)-uniform measure if \(0 \in \operatorname{supp}(\mu)\) and
\[
\mu(B(x,r)) = r^\alpha \quad \text{for every } r > 0 \text{ and every } x \in \operatorname{supp}(\mu).
\]
We denote the set of \((\alpha, \lVert \cdot \rVert)\)-uniform measures by \(\mathcal{U}_{\lVert \cdot \rVert}(\alpha)\). In what follows, we will often drop the explicit dependence on \(\lVert \cdot \rVert\) since the norm will usually be fixed from the context.
\end{definizione}

\begin{proposizione}\label{prop:tangents_are_uniform}
Let \(\phi\) be a measure with \((\alpha,\lVert \cdot\rVert)\)-density on \(\mathbb{R}^n\). Then, for \(\phi\)-almost every \(x \in \mathbb{R}^n\), it holds that
\[
\Tan_\alpha(\phi,x) \subseteq \Theta^\alpha(\phi,x)\,\mathcal{U}(\alpha).
\]
\end{proposizione}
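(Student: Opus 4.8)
I would prove Proposition \ref{prop:tangents_are_uniform} by adapting the classical argument (cf.\ \cite[Theorem 3.11]{DeLellis2008RectifiableMeasures}), the point being that the norm enters only through the definition of the density and through elementary inclusions of balls. The plan is as follows. First, one sets up a ``good'' $\phi$-full set of base points. Since $\Theta^\alpha(\phi,\cdot)$ is a $\phi$-a.e.\ defined Borel function, Egorov's and Lusin's theorems (applied on compact exhausting pieces) produce, for each $\eta>0$, a Borel set $E$ with $\phi(\R^n\setminus E)<\eta$ such that $\Theta^\alpha(\phi,\cdot)|_E$ is continuous and $r^{-\alpha}\phi(\bar B(y,r))\to\Theta^\alpha(\phi,y)$ \emph{uniformly} for $y\in E$. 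By the Lebesgue density theorem for Radon measures in $\R^n$ (valid for Euclidean balls by Besicovitch's covering theorem, and transferred to $\norm$-balls via their comparability with Euclidean balls together with the a priori bound $\phi(\bar B(x,r))\le C r^\alpha$), $\phi$-almost every point is a $\phi$-density point of $E$. Hence for $\phi$-a.e.\ $x$ we may assume: $\Theta^\alpha(\phi,x)\in(0,\infty)$; $x\in\supp\phi$; $x\in E$ and $x$ is a $\phi$-density point of $E$; and $\limsup_{r\to0}r^{-\alpha}\phi(\bar B(x,r))<\infty$. Fix such an $x$; we show every $\mu\in\Tan_\alpha(\phi,x)$ lies in $\Theta^\alpha(\phi,x)\,\mathcal U(\alpha)$.

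Let $r_i\to0$ with $\mu_i:=r_i^{-\alpha}T_{x,r_i}\phi\rightharpoonup\mu$. Upper semicontinuity of weak-* convergence on compact sets and Remark \ref{changeofvariables} give, for every $\rho>0$,
\[
\mu(\bar B(0,\rho))\ \ge\ \limsup_i\, r_i^{-\alpha}\phi(\bar B(x,\rho r_i))\ =\ \Theta^\alpha(\phi,x)\,\rho^\alpha\ >\ 0,
\]
so $0\in\supp\mu$ and the desired identity already holds at $z=0$. Now take an arbitrary $z\in\supp\mu$. For $\delta>0$ we have $\liminf_i\mu_i(B(z,\delta))\ge\mu(B(z,\delta))>0$, i.e.\ $\phi(B(x+r_iz,\delta r_i))\ge c_\delta r_i^\alpha$ for $i$ large; on the other hand $\phi(B(x+r_iz,\delta r_i)\setminus E)\le\phi(B(x,(\|z\|+\delta)r_i)\setminus E)=o(r_i^\alpha)$, since $x$ is a $\phi$-density point of $E$ and $\phi(B(x,s))\le Cs^\alpha$ for small $s$. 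Thus $\phi(B(x+r_iz,\delta r_i)\cap E)>0$ for $i$ large, and as $\phi$ is concentrated on $\supp\phi$ there is $y_i\in\supp\phi\cap E\cap B(x+r_iz,\delta r_i)$. Letting $\delta=\delta_i\downarrow0$ along a diagonal, we obtain $y_i\in\supp\phi\cap E$ with $y_i\to x$ and $\eta_i:=r_i^{-1}\dist(y_i,x+r_iz)\to0$. Continuity of $\Theta^\alpha(\phi,\cdot)|_E$ gives $\Theta^\alpha(\phi,y_i)\to\Theta^\alpha(\phi,x)$, and the uniform convergence on $E$ then yields $r_i^{-\alpha}\phi(\bar B(y_i,sr_i))\to\Theta^\alpha(\phi,x)\,s^\alpha$ for each fixed $s>0$. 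From the triangle inequality of $\norm$,
\[
\bar B(y_i,(\rho-\eta_i)r_i)\ \subseteq\ \bar B(x+r_iz,\rho r_i)\ \subseteq\ \bar B(y_i,(\rho+\eta_i)r_i),
\]
so $r_i^{-\alpha}\phi(\bar B(x+r_iz,\rho r_i))\to\Theta^\alpha(\phi,x)\,\rho^\alpha$; and for every $\rho$ with $\mu(\partial B(z,\rho))=0$ the left side converges to $\mu(\bar B(z,\rho))=r_i^{-\alpha}\phi(\bar B(x+r_iz,\rho r_i))\cdot(1+o(1))$ — more precisely $\mu_i(\bar B(z,\rho))\to\mu(\bar B(z,\rho))$ by the portmanteau theorem. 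Hence $\mu(\bar B(z,\rho))=\Theta^\alpha(\phi,x)\rho^\alpha$ for all but countably many $\rho$, and the monotonicity and right-continuity of $\rho\mapsto\mu(\bar B(z,\rho))$ (plus $\mu(B(z,\rho))\le\mu(\bar B(z,\rho))\le\mu(B(z,\rho'))$ for $\rho<\rho'$) upgrade this to $\mu(B(z,\rho))=\Theta^\alpha(\phi,x)\rho^\alpha$ for \emph{all} $\rho>0$, with either ball convention. Since $\Theta^\alpha(\phi,x)>0$ implies $\supp(\Theta^\alpha(\phi,x)^{-1}\mu)=\supp\mu\ni0$, the measure $\Theta^\alpha(\phi,x)^{-1}\mu$ is $(\alpha,\norm)$-uniform in the sense of Definition \ref{uniform}, i.e.\ $\mu\in\Theta^\alpha(\phi,x)\,\mathcal U(\alpha)$.

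The only genuinely delicate point is the selection of the base points $y_i$: one has to transfer the density information, a priori available only at $x$, to the \emph{moving} centers $x+r_iz$ of the rescaled balls. This works because $z\in\supp\mu$ forces a definite amount of $\phi$-mass into each $B(x+r_iz,\delta r_i)$, while the $E$-complement of that mass is negligible thanks to $x$ being a $\phi$-density point of $E$; the uniform convergence on $E$ supplied by Egorov's theorem then lets one pass to the limit simultaneously over the varying centers and radii. Everything else is soft measure theory, and the norm intervenes only through the homogeneity and triangle inequality used for the ball inclusions and through the differentiation theorem invoked in the first step; this is exactly why the statement, and its proof, survive the passage from the Euclidean metric to an arbitrary norm.
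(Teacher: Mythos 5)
Your argument is correct and is precisely the classical De Lellis argument (Egorov/Lusin to obtain a set $E$ with uniform density convergence and continuous density, Lebesgue density points of $E$, selection of nearby base points $y_i\in\supp\phi\cap E$ for each $z\in\supp\mu$, and sandwiching of balls via the triangle inequality) that the paper's proof consists of a bare reference to, with the observation that it transfers verbatim to an arbitrary norm. You have in effect written out in full exactly the adaptation the paper invokes, so the approach is the same.
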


\begin{proof}
See \cite[Proposition 3.4]{DeLellis2008RectifiableMeasures}. The proof there is written for the Euclidean norm, but the arguments work verbatim for any norm.
\end{proof}

\begin{proposizione}\label{preiss}
Let \(\phi\) be a Borel measure with \((\alpha,\lVert \cdot\rVert)\)-density in \(\mathbb{R}^n\). Then, for \(\phi\)-almost every \(x \in \mathbb{R}^n\) and every \(\mu \in \Tan_\alpha(\phi,x)\), we have
\[
r^{-\alpha}T_{y,r}\mu \in \Tan_\alpha(\phi,x) \quad \text{for every } y \in \operatorname{supp}(\mu) \text{ and every } r > 0.
\]
\end{proposizione}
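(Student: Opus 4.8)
The plan is to recognize this statement as (the combination of Proposition~\ref{prop:tangents_are_uniform} with) the classical theorem of Preiss on tangent measures of tangent measures; see \cite{Preiss1987GeometryDensities}, and \cite{DeLellis2008RectifiableMeasures} for a modern exposition. The key point is that the ball $B_\norm$ enters only through the density hypothesis and never into the argument itself, so the Euclidean proof applies verbatim. Concretely, I would restrict at the outset to the $\phi$-full set of $x$ at which $\Tan_\alpha(\phi,x)\neq\emptyset$ and at which, by Proposition~\ref{prop:tangents_are_uniform}, every tangent measure equals $\Theta^\alpha(\phi,x)$ times a uniform measure; throughout I write $\phi_{x,\rho}:=\rho^{-\alpha}T_{x,\rho}\phi$.

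First I would perform two elementary reductions. The family $\Tan_\alpha(\phi,x)$ is trivially closed under $\lambda\mapsto\rho^{-\alpha}T_{0,\rho}\lambda$ for every $\rho>0$, because rescaling a weak-* convergent sequence $\phi_{x,r_i}\rightharpoonup\lambda$ produces the convergent sequence $\phi_{x,\rho r_i}\rightharpoonup\rho^{-\alpha}T_{0,\rho}\lambda$, with $\rho r_i\to0$. Moreover, writing $\tau_a\nu$ for the push-forward of $\nu$ under $z\mapsto z-a$, a one-line computation gives the identity $\rho^{-\alpha}T_{y,\rho}\mu=\tau_{y/\rho}\bigl(\rho^{-\alpha}T_{0,\rho}\mu\bigr)$, together with $y\in\supp\mu$ if and only if $y/\rho\in\supp\bigl(\rho^{-\alpha}T_{0,\rho}\mu\bigr)$. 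Putting these together, it suffices to treat the case $\rho=1$: I need only show that for $\phi$-a.e.\ $x$, every $\lambda\in\Tan_\alpha(\phi,x)$ and every $b\in\supp\lambda$ satisfy $\tau_b\lambda\in\Tan_\alpha(\phi,x)$.

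To prove this recentering statement I would fix $\lambda=\lim_i\phi_{x,r_i}$ and $b\in\supp\lambda$. Since weak-* convergence forces $\supp\lambda\subseteq\liminf_i\supp\phi_{x,r_i}$, I may pick $b_i\in\supp\phi_{x,r_i}$ with $b_i\to b$, equivalently points $x_i:=x+r_ib_i\in\supp\phi$ with $x_i\to x$ and $(x_i-x)/r_i=b_i$ bounded; a direct computation then yields $r_i^{-\alpha}T_{x_i,r_i}\phi=\tau_{b_i}\phi_{x,r_i}\rightharpoonup\tau_b\lambda$. Hence the claim follows once one knows that, for $\phi$-a.e.\ $x$, any weak-* limit of blow-ups $r_i^{-\alpha}T_{x_i,r_i}\phi$ of $\phi$ taken along base points $x_i\in\supp\phi$ with $x_i\to x$ and $|x_i-x|/r_i$ bounded is again a tangent measure at $x$ — which is exactly Preiss's theorem. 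I would reproduce its proof: one fixes a countable family of localized semidistances $\{F_k\}$ metrizing weak-* convergence, say $F_k(\mu,\nu)=\sup\{\int\varphi\,d(\mu-\nu):\varphi\in\mathrm{Lip}_1,\,0\le\varphi\le1,\,\supp\varphi\subseteq B(0,k)\}$, records how these transform under $\rho^{-\alpha}T_{0,\rho}$, and shows that the ``bad'' set of $x$ at which some such limit escapes $\Tan_\alpha(\phi,x)$ is $\phi$-null, via a Besicovitch covering argument exploiting that $\phi(B(x,s))\sim\Theta^\alpha(\phi,x)\,s^\alpha$ uniformly over small $s$ at $\phi$-a.e.\ $x$, so that blow-ups based at points $x_i$ with $|x_i-x|\lesssim r_i$ are $F_k$-close, at comparable scales, to blow-ups based at $x$ itself.

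The hard part is this last step, i.e.\ Preiss's covering argument; the reductions above are routine bookkeeping with push-forwards and the lower semicontinuity of supports under weak-* convergence. Crucially, neither the reductions nor Preiss's argument invoke any property of the metric beyond the $\alpha$-homogeneity of rescalings, so the conclusion holds for an arbitrary norm.
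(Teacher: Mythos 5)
Your proposal is correct and follows essentially the same route as the paper, which simply cites \cite[Proposition 3.1]{Mattila2005MeasuresGroups} for the metric-group version of Preiss's tangent-of-tangent theorem. You reproduce the standard reduction (scale-invariance of $\Tan_\alpha(\phi,x)$, lower semicontinuity of supports, then Preiss's Besicovitch covering argument) and correctly observe that the norm enters only through the definition of balls, for which the covering property and density estimates still hold, so the classical argument transfers verbatim.
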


\begin{proof}
For a proof in the context of general metric groups, see \cite[Proposition 3.1]{Mattila2005MeasuresGroups}.
\end{proof}

\begin{proposizione}\label{propspt1}
Let \(\phi\) be a Radon measure with \(\alpha\)-density and suppose that \(\mu \in \Tan_\alpha(\phi,x)\) is obtained as the limit $r_i^{-\alpha}T_{x,r_i}\phi \rightharpoonup \mu$
for some sequence \(r_i \to 0\). Then, for every \(y \in \operatorname{supp}(\mu)\), there exists a sequence \(\{z_i\}_{i\in\mathbb{N}} \subseteq \operatorname{supp}(\phi)\) such that
\[
\frac{z_i - x}{r_i} \to y.
\]
\end{proposizione}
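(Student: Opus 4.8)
The plan is to exploit the weak-* convergence $r_i^{-\alpha}T_{x,r_i}\phi \rightharpoonup \mu$ together with the defining property of the support of a measure. Fix $y\in\supp(\mu)$ and, for each $i$, consider the rescaled measures $\mu_i := r_i^{-\alpha}T_{x,r_i}\phi$, so that $\supp(\mu_i) = r_i^{-1}(\supp(\phi)-x)$. It suffices to produce, for every $\eps>0$, an index $i_0$ such that for all $i\ge i_0$ the ball $B(y,\eps)$ meets $\supp(\mu_i)$; a standard diagonal argument over a sequence $\eps\to 0$ then yields points $z_i\in\supp(\phi)$ with $(z_i-x)/r_i\to y$.

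The first step is to observe that since $y\in\supp(\mu)$ we have $\mu(B(y,\eps/2))>0$ for every $\eps>0$. By the portmanteau characterization of weak-* convergence (lower semicontinuity on open sets), $\liminf_i \mu_i(B(y,\eps/2)) \ge \mu(B(y,\eps/2)) > 0$, so for $i$ large enough $\mu_i(B(y,\eps/2))>0$. In particular $B(y,\eps/2)$ must intersect $\supp(\mu_i)$: if it did not, we would have $\mu_i(B(y,\eps/2))=0$, a contradiction. Hence there is a point $w_i\in\supp(\mu_i)\cap B(y,\eps/2)\subseteq B(y,\eps)$. Unwinding the definition of $\mu_i$, the point $z_i := x + r_i w_i$ lies in $\supp(\phi)$ and satisfies $|(z_i-x)/r_i - y| = |w_i - y| < \eps$.

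For the diagonalization, apply the above with $\eps = 1/k$ to obtain, for each $k$, an index $i(k)$ (which we may take strictly increasing) such that for all $i\ge i(k)$ there exists $z_i^{(k)}\in\supp(\phi)$ with $|(z_i^{(k)}-x)/r_i - y|<1/k$. Now define $z_i := z_i^{(k)}$ for $i(k)\le i < i(k+1)$; then $\{z_i\}\subseteq\supp(\phi)$ and $(z_i-x)/r_i\to y$, as desired.

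I do not expect any serious obstacle here: the only point requiring minor care is that one must use lower semicontinuity of mass on \emph{open} sets (the direction of the portmanteau inequality that holds without any boundary assumption on the set), and that the support of a pushforward under the affine map $z\mapsto x+r_i z$ is exactly the affine image of the support, so that membership $w_i\in\supp(\mu_i)$ translates back to $x+r_iw_i\in\supp(\phi)$. No properties of the norm $\norm$ beyond the fact that metric balls are open sets are needed, so the argument is insensitive to the choice of norm.
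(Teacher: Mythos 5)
Your proof is correct and follows the standard argument (the one in De Lellis's notes that the paper cites): use lower semicontinuity of mass on open sets under weak-* convergence to find points of $\supp(\mu_i)$ near $y$, translate back through the affine rescaling, and diagonalize. No issues.
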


\begin{proof}
See \cite[Proposition 3.4]{DeLellis2008RectifiableMeasures}.
\end{proof}





\begin{proposizione}\label{uniformup}
Assume $\mu$ is an $(\alpha,\lVert \cdot\rVert)$-uniform measure. Then for every $z\in\supp(\mu)$  we have
$$\emptyset\neq\Tan_\alpha(\mu, z)\subseteq \mathcal{U}(\alpha).$$
\end{proposizione}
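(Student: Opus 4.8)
The plan is to show that an $(\alpha,\norm)$-uniform measure $\mu$ behaves, at every point of its support, like a measure with $(\alpha,\norm)$-density whose density equals $1$ everywhere, and then invoke the results already established for such measures. First I would observe that the defining property $\mu(B(x,r))=r^\alpha$ for all $x\in\supp(\mu)$ and all $r>0$ trivially implies that $\Theta^\alpha(\mu,x)=\lim_{r\to0}r^{-\alpha}\mu(B(x,r))=1$ for \emph{every} $x\in\supp(\mu)$, so in particular $\mu$ has $(\alpha,\norm)$-density and $\Theta^\alpha(\mu,\cdot)\equiv 1$ on $\supp(\mu)$.

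Next, I would want to transfer the ``$\phi$-almost every $x$'' statements of Proposition \ref{prop:tangents_are_uniform} (and the non-emptiness of $\Tan_\alpha$) from a generic measure with density to \emph{every} point of $\supp(\mu)$. The mechanism is Proposition \ref{preiss}: for $\mu$-a.e.\ $x$ and every $\nu\in\Tan_\alpha(\mu,x)$ one has $r^{-\alpha}T_{y,r}\nu\in\Tan_\alpha(\mu,x)$ for all $y\in\supp(\nu)$, $r>0$; combined with the fact (Proposition \ref{prop:tangents_are_uniform}) that such $\nu$ are uniform, and the homogeneity $\mu(B(x,r))=r^\alpha$, one checks that the ``good'' set of points — those $x\in\supp(\mu)$ with $\emptyset\neq\Tan_\alpha(\mu,x)\subseteq\mathcal U(\alpha)$ — has full $\mu$-measure and is relatively open-dense enough to propagate. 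More concretely: since $\mu$ is uniform, $T_{x,r}\mu$ rescaled is, after the change of variables in Remark \ref{changeofvariables}, essentially a translate/dilate of $\mu$ itself up to weak limits; a blow-up at \emph{any} support point can be realized as a limit of blow-ups at nearby ``good'' support points, using that the good set is $\mu$-conull hence dense in $\supp(\mu)$, together with the uniform mass bounds $\mu(B(z,r))=r^\alpha$ which give local compactness of the rescalings and allow a diagonal argument.

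Concretely the key steps, in order, would be: (i) record $\Theta^\alpha(\mu,x)=1$ for all $x\in\supp(\mu)$; (ii) apply Proposition \ref{prop:tangents_are_uniform} to get $\Tan_\alpha(\mu,x)\subseteq\mathcal U(\alpha)$ for $\mu$-a.e.\ $x$, call this full-measure set $G$; (iii) fix an arbitrary $z\in\supp(\mu)$, pick $z_j\in G$ with $z_j\to z$ (possible since $G$ is dense in $\supp(\mu)$), and for each $j$ a radius scale realizing a tangent $\nu_j\in\mathcal U(\alpha)$ at $z_j$; (iv) use the uniform bound $\mu(B(\cdot,r))=r^\alpha$ to extract, via a diagonal subsequence, a weak-* limit that is simultaneously a tangent measure of $\mu$ at $z$ and a weak-* limit of uniform measures, hence itself in $\mathcal U(\alpha)$ (since $\mathcal U(\alpha)$ is weak-* closed — this last closure fact I would either cite or verify directly from the definition by passing the ball-mass identity to the limit, being careful about balls whose boundary carries mass, which is handled by the usual monotone/continuity argument for metric balls). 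I expect step (iv), the interchange of the two limiting procedures (the blow-up limit at $z$ versus the approximation $z_j\to z$), to be the main technical obstacle: one must choose the radii $r_j$ at $z_j$ and the translation errors $|z_j-z|$ in the right relative regime so that the rescaled measures $r_j^{-\alpha}T_{z,r_j}\mu$ and $r_j^{-\alpha}T_{z_j,r_j}\mu$ have the same weak-* limit, which is where the uniformity $\mu(B(x,r))=r^\alpha$ is used most essentially to control the discrepancy. If instead one prefers to avoid the approximation argument altogether, an alternative is to prove directly that for a uniform measure every support point is a ``good'' point by re-running the proof of Proposition \ref{prop:tangents_are_uniform} verbatim — that proof only uses the existence, finiteness and positivity of the density at the point in question, which here holds everywhere on $\supp(\mu)$; this is likely the cleanest route and I would present it as the main proof, relegating the closure-of-$\mathcal U(\alpha)$ remark to a one-line justification.
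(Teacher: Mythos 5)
Your main route (steps (iii)--(iv), the approximation argument) is both harder than necessary and left with a genuine gap: in step (iv) you flag ``the interchange of the two limiting procedures'' as the main technical obstacle but do not actually resolve it, and it would require careful work to make the diagonal argument go through (one must arrange $|z_j-z|\ll r_j$ \emph{and} still capture an arbitrary prescribed blow-up sequence at $z$, which is not automatic). Your one-paragraph alternative --- re-run the proof of Proposition~\ref{prop:tangents_are_uniform} at every support point, using that $\Theta^\alpha(\mu,x)=1$ for \emph{every} $x\in\supp(\mu)$ --- is morally correct and is closer to what is needed, but even this is more elaborate than the argument the paper intends, and the claim that ``that proof only uses the existence, finiteness and positivity of the density at the point in question'' is a bit loose: one also needs control of the density at nearby support points (which for a uniform measure holds trivially everywhere, so the conclusion is fine, but the stated reason is imprecise).

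The clean argument, which is what the cited \cite[Lemma~3.6]{DeLellis2008RectifiableMeasures} does, avoids both routes. If $\mu$ is $\alpha$-uniform and $z\in\supp(\mu)$, then for any $r>0$ the rescaled measure $r^{-\alpha}T_{z,r}\mu$ is \emph{exactly} $\alpha$-uniform, not merely approximately so: for $y\in\supp\bigl(r^{-\alpha}T_{z,r}\mu\bigr)$ one has $z+ry\in\supp(\mu)$, hence
\[
r^{-\alpha}T_{z,r}\mu\bigl(B(y,\rho)\bigr)=r^{-\alpha}\mu\bigl(B(z+ry,r\rho)\bigr)=r^{-\alpha}(r\rho)^\alpha=\rho^\alpha ,
\]
and $0\in\supp\bigl(r^{-\alpha}T_{z,r}\mu\bigr)$ since $z\in\supp(\mu)$. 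One then only needs the compactness and closedness statement for $\alpha$-uniform measures, which is exactly Lemma~\ref{replica}: any sequence $r_i^{-\alpha}T_{z,r_i}\mu$ is precompact in the weak-$*$ topology (the mass bound $r_i^{-\alpha}T_{z,r_i}\mu(B(0,R))=R^\alpha$ gives tightness on compacta), and any limit is again $\alpha$-uniform by Lemma~\ref{replica}(i). This gives simultaneously $\Tan_\alpha(\mu,z)\neq\emptyset$ and $\Tan_\alpha(\mu,z)\subseteq\mathcal U(\alpha)$, at \emph{every} $z\in\supp(\mu)$, with no almost-everywhere statement and no approximation of $z$ by ``good'' points. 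In short: your steps (i)--(ii) are fine, your alternative route reaches the right conclusion but through a heavier argument, and your primary route (iii)--(iv) is not complete.
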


\begin{proof}
A straightforward adaptation of the proof of \cite[Lemma 3.6]{DeLellis2008RectifiableMeasures} yields the desired conclusion.
\end{proof}

The following is a compactness result for uniform measures and for their supports.

\begin{lemma}\label{replica}
If $\{\mu_i\}_{i\in\N}$ is a sequence of $(\alpha,\lVert \cdot\rVert)$-uniform measures converging in the weak topology to some measure $\nu$, then
\begin{itemize}
\item[(i)] $\nu$ is an $(\alpha,\lVert \cdot\rVert)$-uniform measure,
\item[(ii)] if $y\in\supp(\nu)$ there exists a sequence $\{y_i\}\subseteq \R^n$ such that $y_i\in\supp(\mu_i)$ and $y_i\to y$,
\item[(iii)] if there exists a sequence $\{y_i\}\subseteq \R^n$ such that $y_i\in\supp(\mu_i)$ and $y_i\to y$, then $y\in\supp(\nu)$.
\end{itemize}
\end{lemma}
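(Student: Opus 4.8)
The plan is to prove the three items essentially in order, using the locally uniform bound on the measures $\mu_i$ and the standard portmanteau-type properties of weak-$*$ convergence. First I would record the basic a priori estimate: since each $\mu_i$ is $(\alpha,\norm)$-uniform, we have $\mu_i(B(0,R))\le \mu_i(B(z_i,2R))=(2R)^\alpha$ whenever $\supp(\mu_i)\cap B(0,R)\ne\emptyset$, and otherwise $\mu_i(B(0,R))=0$; hence the masses $\mu_i(B(0,R))$ are uniformly bounded on compact sets (one has to be slightly careful and note that if $0\notin\supp(\mu_i)$ for infinitely many $i$ then, up to extracting, either the supports escape to infinity — forcing $\nu=0$, which is excluded since $(\alpha,\norm)$-uniform measures have $0$ in their support, so this case must be ruled out — or they accumulate somewhere and one recenters). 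This guarantees $\nu$ is a Radon measure and that no mass is created or lost at the boundaries of the balls we use below.

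For item (i), the key point is to upgrade the weak-$*$ convergence to convergence of the ball values $\mu_i(B(x,r))$. I would first prove (ii) and (iii), and then use them. Concretely, fix $x\in\supp(\nu)$ and $r>0$; by (ii) there exist $x_i\in\supp(\mu_i)$ with $x_i\to x$. For $\eps>0$ small, eventually $B(x_i,r-\eps)\subseteq B(x,r)\subseteq B(x_i,r+\eps)$, so using the lower semicontinuity of mass on open sets and upper semicontinuity on compact sets along $\mu_i\rightharpoonup\nu$,
\[
(r-\eps)^\alpha=\liminf_i \mu_i(B(x_i,r-\eps))\le \nu(\overline{B(x,r)}),\qquad \nu(B(x,r))\le\limsup_i\mu_i(B(x_i,r+\eps))=(r+\eps)^\alpha,
\]
wait — one must be careful about open versus closed balls; since $\nu(\partial B(x,r))$ could a priori be positive, I would instead apply the inequalities on open/closed balls of radii $r\pm\eps$ and let $\eps\to0$ to squeeze $\nu(B(x,r))=r^\alpha$ for all $r>0$ and all $x\in\supp(\nu)$. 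Together with $0\in\supp(\nu)$ (from the mass-preservation discussion above, since $\nu\ne0$ and any point of $\supp(\nu)$ can be taken, but the definition of uniform measure additionally requires $0\in\supp\nu$ — here I would note that if $0\notin\supp\nu$ one simply discards this requirement or recenters, following the convention in Definition \ref{uniform}; in the application $\nu$ will be a tangent-type object with $0$ in its support), this gives (i).

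For item (ii): if $x\in\supp(\nu)$ then $\nu(B(x,\rho))>0$ for every $\rho>0$, hence by weak-$*$ convergence $\liminf_i\mu_i(B(x,\rho))>0$, so for $i$ large $\supp(\mu_i)\cap B(x,\rho)\ne\emptyset$; a diagonal argument over $\rho=1/k$ produces $y_i\in\supp(\mu_i)$ with $y_i\to x$. For item (iii): suppose $y_i\in\supp(\mu_i)$, $y_i\to y$; then for every $\rho>0$ and $i$ large, $B(y_i,\rho/2)\subseteq B(y,\rho)$ and $\mu_i(B(y,\rho))\ge\mu_i(B(y_i,\rho/2))=(\rho/2)^\alpha$, so by lower semicontinuity of mass on the open ball $\nu(B(y,\rho))\ge(\rho/2)^\alpha>0$, whence $y\in\supp(\nu)$.

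I expect the main obstacle to be the bookkeeping around open versus closed balls and the possible concentration of mass on spheres $\partial B(x,r)$: the clean statement $\nu(B(x,r))=r^\alpha$ requires squeezing between balls of slightly larger and slightly smaller radii and then letting the perturbation vanish, and one must make sure the uniform upper bound on $\mu_i(B(0,R))$ is genuinely available (which needs the recentering observation, since a priori $0$ need not lie in $\supp(\mu_i)$). All of this is routine but must be done carefully; none of it presents a conceptual difficulty, and indeed the authors will likely cite the analogous Euclidean statement (e.g. \cite[Chapter 14]{DeLellis2008RectifiableMeasures} or \cite{Mattila1995GeometryFractals}) and note that the proof is insensitive to the choice of norm.
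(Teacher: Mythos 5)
Your argument is correct and is essentially the standard proof, which the paper itself does not reproduce but delegates to the cited reference \cite[Proposition 3.4]{DeLellis2008RectifiableMeasures}: prove (ii) and (iii) from lower semicontinuity of the measure of open sets under weak-$*$ convergence, then establish (i) by squeezing $\nu(B(x,r))$ between $(r\pm\eps)^\alpha$ using approximating centers $x_i\in\supp(\mu_i)$, $x_i\to x$, coming from (ii). Your handling of the open-versus-closed ball bookkeeping (perturbing the radius by $\eps$ and then sending $\eps\to 0$) is exactly the right way to deal with possible mass on spheres, and items (ii) and (iii) are argued correctly.

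The one place where you overcomplicate is the worry that $0$ might not lie in $\supp(\mu_i)$ and the associated ``recentering'' discussion. By Definition \ref{uniform}, an $(\alpha,\norm)$-uniform measure has $0$ in its support by fiat, so $0\in\supp(\mu_i)$ for every $i$. In particular $\mu_i(B(0,R))=R^\alpha$ holds for all $i$ with no extra argument, giving the locally uniform mass bound immediately, and taking $y_i=0$ for all $i$ in your item (iii) yields $0\in\supp(\nu)$ directly. There is therefore no need to discard the $0\in\supp(\nu)$ requirement, no need to recenter, and no risk that the supports escape to infinity or that $\nu=0$. With that simplification noted, the proof goes through cleanly.
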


\begin{proof}
See \cite[Proposition 3.4]{DeLellis2008RectifiableMeasures}.
\end{proof}

\begin{osservazione}\label{rmk:not_everything}
    Let $\mu$ be a non trivial $(\alpha,\norm)$-uniform measure in $\R^n$. Then $\alpha\in [0,n]$. Moreover, if $\alpha<n$ then $\supp(\mu)$ has empty interior, and thus its complement is non empty. For a proof of this, the argument from \cite[Remark 3.14]{DeLellis2008RectifiableMeasures} can be adapted word by word by differentiating the measure with respect to the balls of the norm ${\norm}$ instead of the Euclidean ones.
\end{osservazione}

\begin{osservazione}[No $\alpha$-uniform measures for $0<\alpha<1$]\label{rmk:uniform<1}
    In the following we will use that there are no non-trivial $\alpha$-uniform measures when $\alpha\in(0,1)$. This fact actually holds in metric spaces, and we recall a proof here (see also \cite[Section~3.2]{Lorent-Polytope} and \cite[Lemma~30]{Marstrand54}). Let us suppose that $\mu$ is a non-trivial $\alpha$-uniform measure in a complete metric space $(X,d)$, with $\alpha>0$. Fix any point $x_0\in\supp(\mu)$. Fix an integer $N\ge2$, and for every $i\in\{1,\ldots,N\}$ let us define the annuli 
    \[
    A_i(x_0):=B\left(x_0,\frac{2i}{2N}\right)\setminus B\left(x_0,\frac{2i-1}{2N}\right).
    \]
    By uniformity (and the fact that $\alpha>0$) every such annulus has positive $\mu$-measure, and thus we can find a point $x_i\in A_i(x_0)\cap\supp(\mu)$. Moreover, by triangle inequality the balls $B(x_i,\frac{1}{2N})$ are pairwise disjoint and contained in $B(x_0,2)$. It follows that
    \[
    2^\alpha=\mu(B(x_0,1))\ge \sum_{i=1}^{N} \mu(B(x_0,\tfrac{1}{2N}))= N(2N)^{-\alpha}=2^{-\alpha}N^{1-\alpha}.
    \]
    By taking $N$ large enough we discover that $\alpha$ must be at least $1$.
\end{osservazione}

We conclude this section with a remark about the integral of $\norm$-radially symmetric functions with respect to $\norm$-uniform measures.

\begin{definizione}[Radially symmetric functions]
We say that a function $\varphi:\R^n\rightarrow\R$ is radially symmetric (with respect to a norm $\norm$) if there exists a profile function $g:[0,\infty)\rightarrow \R$ such that $\varphi(z)=g(\lVert z\rVert)$.
\end{definizione}

Integrals of radially symmetric functions with respect to uniform measures are easy to compute and we have the following change of variable formula.

\begin{proposizione}\label{prop5}
Let $\mu\in\mathcal{U}(m)$ and suppose $\varphi:\R^n\rightarrow\R$ is a radially symmetric non-negative function. Then, for any $x\in\supp(\mu)$ we have
\begin{equation}
\int  \varphi(z-x)d\mu(z)=m\int_0^\infty r^{m-1}g(r)dr\nonumber,
\end{equation}
where $g$ is the profile function associated to $\varphi$. In particular, the integral on the left-hand side is independent of $x\in\supp(\mu)$.
\end{proposizione}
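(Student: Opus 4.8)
The plan is to compute the integral on the left by pushing $\mu$ forward under the radial map $T_x\colon\R^n\to[0,\infty)$, $T_x(z):=\|z-x\|$, identifying the resulting measure on the half-line, and then invoking the elementary push-forward change of variables formula. Since $\varphi(z)=g(\|z\|)$, the function $z\mapsto\varphi(z-x)$ coincides with $g\circ T_x$, and for every non-negative Borel $h$ on $[0,\infty)$ the definition of push-forward recalled in Remark~\ref{changeofvariables} gives $\int h\,d\big((T_x)_\#\mu\big)=\int(h\circ T_x)\,d\mu$. Hence
\[
\int\varphi(z-x)\,d\mu(z)=\int_{[0,\infty)}g(r)\,d\big((T_x)_\#\mu\big)(r),
\]
all quantities being well defined in $[0,+\infty]$ since $\varphi\ge0$ (so $g\ge0$); here we may take $g$ Borel, e.g.\ $g(r)=\varphi(rz_0)$ for a fixed $z_0$ of $\norm$-norm $1$. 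So the statement reduces to identifying $(T_x)_\#\mu$.

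The one real step is to show
\[
(T_x)_\#\mu=m\,r^{m-1}\,\mathcal{L}^1\llcorner[0,\infty)\qquad\text{for every }x\in\supp(\mu).
\]
To see this, first note that $(\alpha,\norm)$-uniformity gives $\mu(\{z:\|z-x\|<s\})=s^m$ for all $s>0$ and $x\in\supp(\mu)$: this is the definition when metric balls are open, and when they are closed it follows by writing the open ball as the increasing union of the closed balls of radius $s-1/n$ (in particular $\norm$-spheres centered on $\supp(\mu)$ are $\mu$-null). Therefore, for every $s\ge0$,
\[
(T_x)_\#\mu\big([0,s)\big)=\mu\big(\{z:\|z-x\|<s\}\big)=s^m=\int_0^s m\,r^{m-1}\,dr=\Big(m\,r^{m-1}\,\mathcal{L}^1\llcorner[0,\infty)\Big)\big([0,s)\big).
\]
Since the intervals $[0,s)$, $s>0$, form a $\pi$-system generating the Borel $\sigma$-algebra of $[0,\infty)$ and both measures are finite on bounded sets, they agree. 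Plugging this into the previous display yields $\int\varphi(z-x)\,d\mu(z)=m\int_0^\infty r^{m-1}g(r)\,dr$, and since the right-hand side does not involve $x$, the integral on the left is independent of $x\in\supp(\mu)$.

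There is no genuine obstacle here: the proposition is in essence a change-of-variables computation. The only points deserving a line of care are the open-versus-closed ball convention (handled via nullity of spheres), the elementary fact that two locally finite Borel measures on $[0,\infty)$ coinciding on every $[0,s)$ must be equal, and the Borel measurability of the profile $g$. An equivalent route is the layer-cake identity $\int\varphi(z-x)\,d\mu(z)=\int_0^\infty\mu(\{z:g(\|z-x\|)>t\})\,dt$ followed by Tonelli, but it rests on the same identification of $(T_x)_\#\mu$.
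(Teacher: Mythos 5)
Your proof is correct, and it takes a mildly different route from the paper's. The paper first verifies the identity for radial simple functions of the form $\sum_i a_i\chi_{B(0,r_i)}$ (where both sides equal $\sum_i a_i r_i^m$ by uniformity) and then passes to a general non-negative $\varphi$ by monotone convergence; you instead identify the push-forward $(T_x)_\#\mu$ with $m\,r^{m-1}\,\mathcal{L}^1\llcorner[0,\infty)$ by matching the two measures on the $\pi$-system of intervals $[0,s)$, and then invoke the push-forward change-of-variables formula. The two arguments rest on exactly the same computation — the $\mu$-measure of $\{z:\|z-x\|<s\}$ equals $s^m$ for $x\in\supp(\mu)$ — so the content is the same, but your packaging is a bit cleaner: it isolates the distribution of $z\mapsto\|z-x\|$ under $\mu$ as the one real object, makes the independence from $x$ immediate, and sidesteps explicit approximation by simple functions. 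Your care about open versus closed balls (inner regularity plus nullity of spheres) and about Borel measurability of the profile $g$ are the right small details to mention; they are equally implicit in the paper's simple-function proof.
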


\begin{proof}
First one proves the formula for simple functions of the form
$\varphi(z):=\sum_{i=1}^k a_i\chi_{B_{r_i}(0)}$,
where $a_i,r_i\geq 0$ for any $i=1,\ldots,k$. Then, one proves the result for a general $\varphi$ by Beppo Levi's convergence theorem. See also \cite[Remark 3.15]{DeLellis2008RectifiableMeasures}
\end{proof}

\section{Barycenters}\label{sec:barycenters}

Throughout this section we will suppose that $\lVert\cdot\rVert$ is a fixed norm on $\R^n$ and $\mu$ is an $\alpha$-uniform measure (with respect to $\norm$). 
We assume further that the set
$$N:=\{z\in \R^n:\lVert \cdot\rVert^2 \text{ is not differentiable at } z\}$$
is $\mu$-null, so that quantities involving the integral of $\nabla \norm^2$ in $d\mu$ are well defined. This will not be an issue since, in the case that $\mu$ is not differentiable on a positive measure set, we immediately find a contradiction, see Step (ii) in Section \ref{sec:proof}.

\begin{definizione}[Barycenter]
For every $x\in \supp(\mu)$ and $r>0$ we define the following objects
\begin{itemize}
    \item[(i)] the \emph{non-linear barycenter } 
\begin{equation}\label{eq:nonlinear_barycenter}
b_\mu(r):=\frac{1}{r^\alpha}\int_{B(0,r)} \nabla\lVert\cdot\rVert^2(z)d\mu(z)\quad\in\R^n;
\end{equation}
    \item[(ii)] the \emph{polarization average in direction $y$}
    \[
    b_\mu(r;y):=\frac{1}{r^\alpha} \int_{B(0,r)} V(z,y)d\mu(z)\quad\in\R,
    \]
    where $y\in \R^n$ and
    \begin{equation}\label{eq:polarization}
    V(z,y):=\frac{\lVert z\rVert^2+\lVert y\rVert^2-\lVert z-y\rVert^2}{2}.
    \end{equation}
\end{itemize}
\end{definizione}
The quantity $b_\mu(r;y)$ plays the role of $b_\mu(r)\cdot y$, at least in the approximation when $y$ is very small compared to $r$.

\begin{osservazione}\label{rmk:triangle_inequality}
    Observe that it still holds
\[
| b_\mu(r;y)|\le r\|y\|.
\]
This is a consequence of the Cauchy-Schwarz inequality for $V$
$$V(z,u)\leq \lVert z\rVert\lVert u\rVert.$$
Such inequality can be easily obtained by the triangle inequality as shown here below. First
\begin{equation}
    \begin{split}
        2V(z,u)=\lVert z\rVert^2+\lVert u\rVert^2-\lVert z-u\rVert^2 \leq \lVert z\rVert^2+\lVert u\rVert^2-\lVert z\rVert^2-\lVert u\rVert^2+2\lVert z\rVert\lVert u\rVert
        \leq 2\lVert z\rVert\lVert u\rVert,
        \nonumber
    \end{split}
\end{equation}
and similarly
\begin{equation}
    \begin{split}
        2V(z,u)=\lVert z\rVert^2+\lVert u\rVert^2-\lVert z-u\rVert^2 \geq \lVert z\rVert^2+\lVert u\rVert^2-\lVert z\rVert^2-\lVert u\rVert^2-2\lVert z\rVert\lVert u\rVert
        \geq -2\lVert z\rVert\lVert u\rVert.
        \nonumber
    \end{split}
\end{equation}
This concludes the proof.

\end{osservazione}

\subsection{Quadratic decay}
One of the cornerstones of Martsrand's argument is the quadratic decay of the barycenters. In the next Lemma we show that the same decay for the polarization $V$ holds for any norm in $\R^n$. We adapt the original strategy, which is based on a shifting-balls argument (see Figure \ref{fig:shifting_balls}).

\begin{figure}
    \centering
    \def\svgscale{0.4}{
\begingroup%
  \makeatletter%
  \providecommand\color[2][]{%
    \errmessage{(Inkscape) Color is used for the text in Inkscape, but the package 'color.sty' is not loaded}%
    \renewcommand\color[2][]{}%
  }%
  \providecommand\transparent[1]{%
    \errmessage{(Inkscape) Transparency is used (non-zero) for the text in Inkscape, but the package 'transparent.sty' is not loaded}%
    \renewcommand\transparent[1]{}%
  }%
  \providecommand\rotatebox[2]{#2}%
  \newcommand*\fsize{\dimexpr\f@size pt\relax}%
  \newcommand*\lineheight[1]{\fontsize{\fsize}{#1\fsize}\selectfont}%
  \ifx\svgwidth\undefined%
    \setlength{\unitlength}{403.27298382bp}%
    \ifx\svgscale\undefined%
      \relax%
    \else%
      \setlength{\unitlength}{\unitlength * \real{\svgscale}}%
    \fi%
  \else%
    \setlength{\unitlength}{\svgwidth}%
  \fi%
  \global\let\svgwidth\undefined%
  \global\let\svgscale\undefined%
  \makeatother%
  \begin{picture}(1,0.84222995)%
    \lineheight{1}%
    \setlength\tabcolsep{0pt}%
    \put(0,0){\includegraphics[width=\unitlength,page=1]{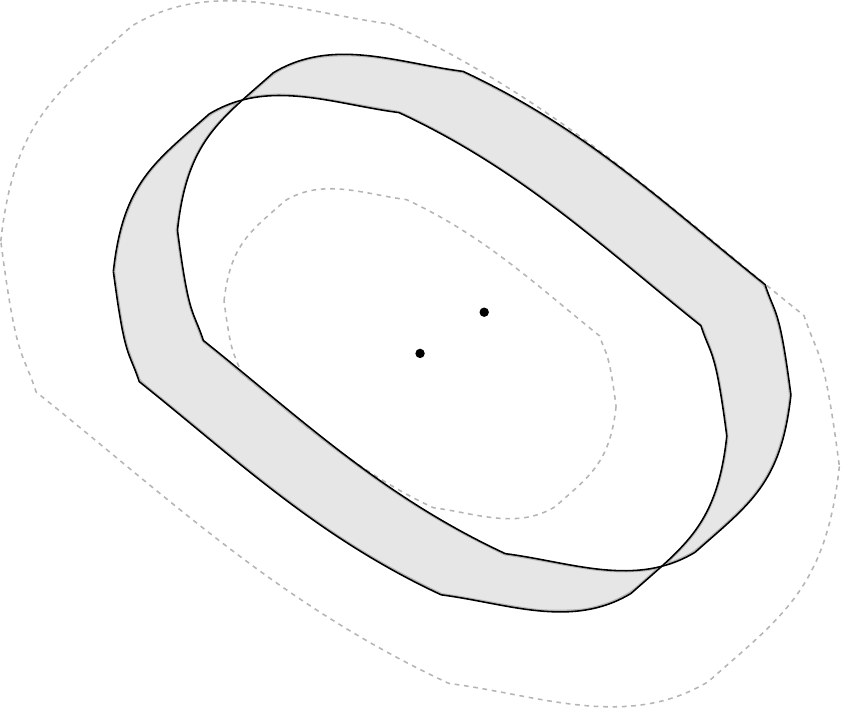}}%
    \put(0.60293446,0.46328572){\makebox(0,0)[lt]{\lineheight{1.25}\smash{\begin{tabular}[t]{l}$y$\end{tabular}}}}%
    \put(0.44917439,0.36118767){\makebox(0,0)[lt]{\lineheight{1.25}\smash{\begin{tabular}[t]{l}$0$\end{tabular}}}}%
    \put(0.30472815,0.29206042){\makebox(0,0)[lt]{\lineheight{1.25}\smash{\begin{tabular}[t]{l}$S_1$\end{tabular}}}}%
    \put(0.68793171,0.58758384){\makebox(0,0)[lt]{\lineheight{1.25}\smash{\begin{tabular}[t]{l}$S_2$\end{tabular}}}}%
  \end{picture}%
\endgroup%
}
    \caption{The shifting balls argument. The (boundaries of the) balls $B(0,r)$ and $B(y,r)$ are depicted with a black line. The (boundaries of the) balls $B(0,r+\|y\|)$ and $B(0,r-\|y\|)$ are depicted with a dashed gray line.}
    \label{fig:shifting_balls}
\end{figure}

\begin{lemma}[Quadratic decay of polarization]\label{lemma:quadratic_decay} Let $\mu$ be an $\alpha$-uniform measure. Then for every $y\in B(x,r)\cap \supp(\mu)$ it holds that $| b_\mu(r;y)|\le C(\alpha)\|y\|^2$.
\end{lemma}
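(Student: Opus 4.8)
The plan is to adapt the classical shifting-balls argument, with the polarization $V$ playing the role of the Euclidean scalar product. Since by Remark \ref{rmk:triangle_inequality} we always have $|b_\mu(r;y)|\le r\|y\|$, it suffices to prove the estimate when $\|y\|\le r/2$: in the complementary range $\|y\|>r/2$ we already get $|b_\mu(r;y)|\le r\|y\|\le 2\|y\|^2$. Also, by Remark \ref{rmk:uniform<1} only the case $\alpha\ge 1$ is relevant (for $\alpha=0$ the measure is a Dirac mass and the statement is trivial). So from now on I assume $y\in\supp(\mu)$, $\|y\|\le r/2$, and $\alpha\ge1$.

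First I would rewrite, using $\mu(B(0,r))=r^\alpha$,
\[
2r^\alpha b_\mu(r;y)=\int_{B(0,r)}\big(\|z\|^2-\|z-y\|^2\big)\,d\mu(z)+\|y\|^2r^\alpha,
\]
so the whole problem reduces to bounding $D:=\int_{B(0,r)}\|z\|^2\,d\mu-\int_{B(0,r)}\|z-y\|^2\,d\mu$. The crucial observation is that $z\mapsto\|z\|^2\chi_{B(0,r)}(z)$ is radially symmetric, so Proposition \ref{prop5}, applied with the centers $0$ and $y$ (both in $\supp(\mu)$), gives $\int_{B(0,r)}\|z\|^2\,d\mu=\int_{B(y,r)}\|z-y\|^2\,d\mu$. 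Hence $D=\int_{B(y,r)}\|z-y\|^2\,d\mu-\int_{B(0,r)}\|z-y\|^2\,d\mu$, and after cancelling the common contribution of $B(0,r)\cap B(y,r)$,
\[
D=\int_{B(y,r)\setminus B(0,r)}\|z-y\|^2\,d\mu-\int_{B(0,r)\setminus B(y,r)}\|z-y\|^2\,d\mu.
\]

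Now I would use two facts about the two "lunes". By uniformity $\mu(B(0,r))=\mu(B(y,r))=r^\alpha$, so they have the same $\mu$-measure $m_0:=\mu(B(0,r)\setminus B(y,r))=\mu(B(y,r)\setminus B(0,r))$. The triangle inequality forces $\|z-y\|\in[r,r+\|y\|)$ on $B(0,r)\setminus B(y,r)$ and $\|z-y\|\in[r-\|y\|,r)$ on $B(y,r)\setminus B(0,r)$, so $\|z-y\|^2$ varies by at most $(r+\|y\|)^2-(r-\|y\|)^2=4r\|y\|$ across the union; combined with the equality of masses this yields $|D|\le 4r\|y\|\,m_0$. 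Finally, both lunes lie inside the thin annulus $B(0,r+\|y\|)\setminus B(0,r-\|y\|)$, hence $m_0\le (r+\|y\|)^\alpha-(r-\|y\|)^\alpha\le C(\alpha)\,r^{\alpha-1}\|y\|$, the last step being the Lipschitz bound for $t\mapsto t^\alpha$ on $[r/2,3r/2]$. Putting these together gives $|D|\le 4C(\alpha)\,r^\alpha\|y\|^2$, and then the displayed identity yields $|b_\mu(r;y)|\le\big(\tfrac12+2C(\alpha)\big)\|y\|^2$, as desired.

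The main obstacle is the scaling gap. A naive estimate, bounding $|D|$ by $\big(\sup_{\text{annulus}}\|z-y\|^2\big)\,\mu(\text{annulus})\lesssim r^2\cdot r^{\alpha-1}\|y\|$, only gives the weak decay $|b_\mu(r;y)|\lesssim r\|y\|$. Recovering the genuinely quadratic bound requires exploiting the \emph{cancellation} between the two lunes — that they carry exactly the same $\mu$-mass (a direct consequence of the uniformity of $\mu$) and that $\|z-y\|^2$ oscillates only by $O(r\|y\|)$, not $O(r^2)$, across them — so that the missing factor $\|y\|/r$ is recovered. The remaining ingredients (the radial-symmetry change of variables of Proposition \ref{prop5}, and the elementary Lipschitz estimate for $t\mapsto t^\alpha$) are routine.
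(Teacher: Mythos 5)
Your proposal is correct and follows essentially the same shifting-balls strategy as the paper: reduce to $\|y\|\le r/2$ via Remark \ref{rmk:triangle_inequality}, recenter the integral of $\|z-y\|^2$ from $B(0,r)$ to $B(y,r)$ using the radial-symmetry formula (Proposition \ref{prop5}) and the fact that $0,y\in\supp(\mu)$, exploit the equality of $\mu$-masses of the two lunes given by uniformity, observe that $\|z-y\|^2$ oscillates only by $4r\|y\|$ on the thin annulus, and bound the lune mass by $C(\alpha)r^{\alpha-1}\|y\|$. The only cosmetic differences are that you invoke Proposition \ref{prop5} explicitly where the paper does so implicitly, and you make an unnecessary (though harmless) detour through Remark \ref{rmk:uniform<1} to restrict to $\alpha\ge 1$ — the Lipschitz estimate for $t\mapsto t^\alpha$ on $[r/2,3r/2]$ works for any $\alpha>0$.
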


\begin{proof} 
    First observe that if $y\in B(0,r)\setminus B(0,r/2)$ then the inequality holds true because then $| b_\mu(r;y)|\le r\|y\|\le 2\|y\|^2$.
    We can thus suppose in the following that $y\in B(0,r/2)$.
    We have
    \[
    2| b_\mu(r;y)|\le \frac{1}{r^\alpha}\int_{B(0,r)} \|y\|\,^2\, d\mu(z)+\frac{1}{r^\alpha} \left|\int_{B(0,r)} \|z\|\,^2\, d\mu(z)-\int_{B(0,r)} \|z-y\|\,^2\, d\mu(z)\right|.
    \]
    The first term coincides with $\|y\|^2$, so we just need to estimate the second. Set $S_1:=B(0,r)\setminus B(y,r)$ and $S_2:=B(y,r)\setminus B(0,r)$. Notice that since the measure $\mu$ is $\alpha$-uniform, we have  
    \[
    \mu(S_1)=\mu(B(0,r))-\mu\big(B(0,r)\cap B(y,r))=\mu(B(y,r))-\mu\big(B(0,r)\cap B(y,r))=\mu(S_2).
    \]
    Thus, since $y$ and $0$ are both in $\supp(\mu)$, we can rewrite the last term as
    \begin{align}
    \frac{1}{r^\alpha} \left|\int_{B(0,r)} \|z-y\|\,^2\, d\mu(z)-\int_{B(y,r)} \|z-y\|\,^2\, d\mu(z)\right|&=\frac{1}{r^\alpha} \left|\int_{S_1} \|z-y\|\,^2\, d\mu(z)-\int_{S_2} \|z-y\|\,^2\, d\mu(z)\right|\nonumber\\
    & \le \frac{1}{r^\alpha}\left( \mu(S_2) (r+\|y\|)^2-\mu(S_1) (r-\|y\|)^2\right), \label{eq:moon_difference}
    \end{align}
where the last inequality comes from the fact that 
$$r\le\lVert z-y\rVert\leq r+\|y\| \quad\text{on $S_2$}\qquad \text{and}\qquad  r-\|y\|\le \lVert z-y\rVert\leq r\quad\text{on $S_1$}.$$

    Moreover,
    \begin{align*}
        \mu(S_1)\le \mu(B(0,r))-\mu(B_{r-\|y\|}(0))&=(r^\alpha-(r-\|y\|)^\alpha)=r^\alpha\left(1-\left(1-\frac{\|y\|}{r}\right)^\alpha\right)\\
        & \le r^\alpha C\alpha \frac{\|y\|}{r}=C(\alpha)r^{\alpha-1}\|y\|.
    \end{align*}
    It follows that
    \begin{align*}
     \frac{1}{r^\alpha} \left(\mu(S_2) (r+\|y\|)^2-\mu(S_1) (r-\|y\|)^2\right)&=\frac{1}{r^\alpha}\mu(S_1)\left((r+\|y\|)^2-(r-\|y\|)^2\right)\\
     &=\frac{1}{r^\alpha}\mu(S_1)4r\|y\|\\
     & \le \frac{1}{r^\alpha} C(\alpha)r^{\alpha-1} \|y\|4r\|y\|\\
     &= 4C(\alpha)\|y\|^2.\qedhere
    \end{align*}
\end{proof}

\subsection{Vanishing barycenters}
From the previous lemma we know that $\fint_{B(0,r)} (\|z\|^2-\|z-y\|^2)d\mu(z)$ decays quadratically in $y$, for $y\in \supp(
\mu)$. On the other hand by the Taylor expansion of $\|\!\cdot\!\|^2$, and formally neglecting error terms, the integrand $\|z\|^2-\|z-y\|^2$ is approximately $\nabla\norm^2(z)\cdot y$, at least when $y$ is small compared to $z$. This suggests that for a tangent measure $\nu$ at $0$ we should have 
\[
\fint_{B(0,r)} \nabla\|z\|^2\cdot y \,d\mu(z)=0
\]
at least for $y\in\supp(u)$. To make this rigorous, let us introduce the remainders
\begin{align*}
\begin{aligned}
    \Delta(z,y):&=\|z-y\|^2-\|z\|^2+\nabla\norm^2(z)\cdot y\\
    &= \|y\|^2-2V(z,y)+\nabla\norm^2(z)\cdot y.
\end{aligned}
\end{align*}
Observe the $1$-homogeneity property 
\[
\nabla\norm^2(\lambda z)=\lambda \nabla \norm^2(z)\qquad\text{for every $z\in \R^n$ and every $\lambda\ge 0$}
\]
and the simultaneous $2$-homogeneity property of $V$
\begin{equation}\label{eq:2-homogeneity}
    V(\lambda z,\lambda y)=\lambda^2V(z,y)\qquad\text{for every $z,y\in\R^n$ and every $\lambda\ge 0$.}
\end{equation}
With the same computations as in Remark \ref{rmk:triangle_inequality} we deduce that 
\[
|\Delta(z,y)|\lesssim \|z\|\|y\|+\|y\|^2.
\]
Moreover, if $\norm$ is differentiable at $z$ then 
\[
\lim_{y\to 0}\frac{\Delta(z,y)}{\lVert y\rVert}=0,
\]
and thus by dominated convergence, for every fixed $r$ we have
\begin{equation}\label{eq:dominated}
\lim_{y\to0} \frac{1}{\|y\|} \int_{B(0,r)} |\Delta(z,y)|d\mu(z)=0.
\end{equation}
From
\[
2V(z,y)=\nabla(\lVert\cdot\rVert^2)(z)\cdot y+\lVert y\rVert^2-\Delta(z,y)
\]
we also obtain that
\begin{equation}\label{eq:bound}
    \left|\frac{1}{r^\alpha} \int_{B(0,r)}\nabla\lVert\cdot\rVert^2(z)d\mu(z)\cdot y\right|\le\left|\frac{1}{r^\alpha} \int_{B(0,r)}2V(z,y)d\mu(z)\right|+\|y\|^2 +\left|\frac{1}{r^\alpha} \int_{B(0,r)} \Delta(z,y)d\mu(z)\right|.
\end{equation}

\begin{proposizione}\label{prop:barycenter_zero}
    Let $\nu\in \Tan_\alpha(\mu,0)$. For every $w\in \supp(\nu)$ and every $\rho>0$ we have
    \[
    b_\nu(\rho)\cdot w=0.
    \]
\end{proposizione}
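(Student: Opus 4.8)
The plan is to pass to the limit in the inequality \eqref{eq:bound} along the sequence of rescalings that defines $\nu$, and to combine this with the quadratic decay of the polarization average (Lemma \ref{lemma:quadratic_decay}) and the vanishing of the remainder term \eqref{eq:dominated}. Fix $\rho>0$ and $w\in\supp(\nu)$. By definition there is a sequence $r_i\to 0$ with $r_i^{-\alpha}T_{0,r_i}\mu\rightharpoonup\nu$, and by Proposition \ref{propspt1} (applied with $x=0$) there is a sequence $z_i\in\supp(\mu)$ with $z_i/r_i\to w$; write $y_i:=z_i\in\supp(\mu)$, so that $y_i/r_i\to w$ and in particular $\|y_i\|\to 0$. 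The idea is to test \eqref{eq:bound} — written for the measure $\mu$ on the ball $B(0,\rho r_i)$ with displacement $y=y_i$ — and then rescale by $r_i$, noting that $\|y_i\|\le \rho r_i$ eventually so that $y_i\in B(0,\rho r_i)\cap\supp(\mu)$ and Lemma \ref{lemma:quadratic_decay} applies.

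Concretely, I would first rewrite each of the three quantities in \eqref{eq:bound} (with $r=\rho r_i$) in terms of the rescaled measure $\mu_i:=r_i^{-\alpha}T_{0,r_i}\mu$ and the rescaled displacement $w_i:=y_i/r_i$. Using the change of variables of Remark \ref{changeofvariables} together with the $1$-homogeneity of $\nabla\norm^2$ and the $2$-homogeneity \eqref{eq:2-homogeneity} of $V$, one checks that
\[
\frac{1}{(\rho r_i)^\alpha}\int_{B(0,\rho r_i)}\nabla\norm^2(z)\,d\mu(z)\cdot y_i
= \frac{1}{\rho^\alpha}\int_{B(0,\rho)}\nabla\norm^2(\zeta)\,d\mu_i(\zeta)\cdot w_i,
\]
and similarly the polarization term becomes $\rho^{-\alpha}\int_{B(0,\rho)}2V(\zeta,w_i)\,d\mu_i(\zeta)$, while $\|y_i\|^2=r_i^2\|w_i\|^2$. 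Dividing \eqref{eq:bound} through by $\|y_i\|=r_i\|w_i\|$, the left-hand side is $\|w_i\|^{-1}$ times $\rho^{-\alpha}\int_{B(0,\rho)}\nabla\norm^2\,d\mu_i\cdot w_i$; on the right-hand side the polarization term is bounded by $2C(\alpha)\|y_i\|^2/(r_i\|w_i\|)=2C(\alpha)r_i\|w_i\|$ by Lemma \ref{lemma:quadratic_decay} (valid since $y_i\in B(0,\rho r_i)\cap\supp(\mu)$), the middle term is $\|y_i\|^2/\|y_i\|=r_i\|w_i\|$, and the remainder term is $\|y_i\|^{-1}\rho^{-\alpha}\int_{B(0,\rho r_i)}|\Delta(z,y_i)|\,d\mu(z)$, which after the same rescaling equals $\|w_i\|^{-1}\rho^{-\alpha}\int_{B(0,\rho)}|\Delta_i(\zeta,w_i)|\,d\mu_i(\zeta)$ for an appropriately rescaled $\Delta_i$ — and this is exactly the quantity that \eqref{eq:dominated}-type reasoning forces to $0$.

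Passing $i\to\infty$: since $\mu_i\rightharpoonup\nu$, $w_i\to w$, $r_i\to 0$, and $\nabla\norm^2$, $V(\cdot,w)$ are continuous with controlled growth (and $\supp(\nu)\ni 0$ with $\nu(\partial B(0,\rho))=0$ for a.e.\ $\rho$, so that weak convergence yields convergence of the integrals over $B(0,\rho)$), the left-hand side converges to $\|w\|^{-1}\rho^{-\alpha}\int_{B(0,\rho)}\nabla\norm^2\,d\nu\cdot w = \|w\|^{-1}b_\nu(\rho)\cdot w$, while the right-hand side tends to $0$. This gives $b_\nu(\rho)\cdot w=0$ for every $\rho$ outside a countable set, and then for all $\rho>0$ by the continuity of $\rho\mapsto b_\nu(\rho)$ (or by replacing $B(0,\rho)$ with a slightly larger or smaller ball). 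The case $w=0$ is trivial.

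The main obstacle I anticipate is the remainder estimate: one must justify that $\|y_i\|^{-1}\int_{B(0,\rho r_i)}|\Delta(z,y_i)|\,d\mu(z)\to 0$ \emph{uniformly enough} along the sequence. Equation \eqref{eq:dominated} gives this for a \emph{fixed} measure and a \emph{fixed} radius with $y\to 0$, but here the radius $\rho r_i$ also shrinks and the measure is being rescaled, so the clean statement is really about the rescaled objects: after rescaling, $|\Delta_i(\zeta,w_i)|\le \frac{1}{r_i}|\Delta(r_i\zeta,r_i w_i)|$, and one shows via the pointwise bound $|\Delta(z,y)|\lesssim\|z\|\|y\|+\|y\|^2$ together with the $\mu$-a.e.\ differentiability of $\norm^2$ that $\frac{1}{\|y_i\|}|\Delta(r_i\zeta,y_i)|\to 0$ for $\nu$-a.e.\ $\zeta$ and is dominated, so that the integral against $\mu_i$ (or rather against $\nu$, after a further careful weak-convergence argument using that the rescaled remainders are small) vanishes in the limit. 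Making this last point fully rigorous — interchanging the rescaling limit with the weak-$*$ limit in the remainder term — is the delicate part; everything else is the bookkeeping of homogeneous rescalings.
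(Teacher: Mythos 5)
Your proposal misses the essential two-scale device, and as a result the conclusion does not follow. Since $y_i\in\supp(\mu)$ with $y_i/r_i\to w$ and $w\neq 0$, the displacement $\|y_i\|\sim r_i\|w\|$ is \emph{comparable} to the dilation scale $r_i$. Your displayed change-of-variables identity is moreover off by a factor $r_i^2$: using the $1$-homogeneity of $\nabla\norm^2$ and $y_i=r_iw_i$, the left-hand side of \eqref{eq:bound} at radius $\rho r_i$ equals $\tfrac{r_i^2}{\rho^\alpha}\int_{B(0,\rho)}\nabla\norm^2\,d\mu_i\cdot w_i$, not $\tfrac{1}{\rho^\alpha}\int_{B(0,\rho)}\nabla\norm^2\,d\mu_i\cdot w_i$. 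Dividing by $\|y_i\|=r_i\|w_i\|$ therefore leaves a residual factor $r_i$ on the left, which sends the left-hand side to $0$ and renders the inequality vacuous. The only normalization that makes the left-hand side converge to $\|w\|^{-1}b_\nu(\rho)\cdot w$ is to divide by $r_i^2\|w_i\|$; but then the polarization term, bounded by $2C(\alpha)\|y_i\|^2/(r_i^2\|w_i\|)=2C(\alpha)\|w_i\|$, tends to $2C(\alpha)\|w\|\neq 0$, and the remainder (which by the joint $2$-homogeneity of $\Delta$ rewrites as $\rho^{-\alpha}\|w_i\|^{-1}\int_{B(0,\rho)}\Delta(\zeta,w_i)\,d\mu_i(\zeta)$) converges to $\rho^{-\alpha}\|w\|^{-1}\int_{B(0,\rho)}\Delta(\zeta,w)\,d\nu(\zeta)$, which has no reason to vanish since $w$ is not small. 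At best you obtain $|b_\nu(\rho)\cdot w|\lesssim\|w\|^2$, not zero.

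The missing idea is to decouple the displacement scale from the dilation scale. The paper passes to a subsequence $k(i)$ with $\|y_{k(i)}\|/r_i\to 0$, so that the displacement $y_{k(i)}$, living at scale $r_{k(i)}$, is negligible compared to the ball radius $\rho r_i$. Testing \eqref{eq:bound} at radius $\rho r_i$ with displacement $y_{k(i)}$ and dividing by $r_ir_{k(i)}$ still makes the left-hand side converge to $b_\nu(\rho)\cdot w$, but now the polarization and middle terms are of order $\|y_{k(i)}\|^2/(r_ir_{k(i)})\sim (r_{k(i)}/r_i)\|w\|^2\to 0$, and the remainder becomes $\tfrac{r_i}{r_{k(i)}}\rho^{-\alpha}\int_{B(0,\rho)}\Delta(\zeta,y_{k(i)}/r_i)\,d\mu_i(\zeta)$ with inner argument $y_{k(i)}/r_i\to 0$, controlled by a further diagonal extraction using \eqref{eq:dominated} applied to each $\mu_i$. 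This two-scale trick is also what makes the classical Euclidean Marstrand argument work; the delicacy you flag in the remainder is genuine, but your argument fails already at the polarization term.
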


\begin{proof}
    Take $r_i\to 0$ such that $r_i^{-\alpha}T_{0,r_i} \mu\rightharpoonup \nu$. Given $w\in\supp(\nu)$ we can find $w_i:=y_i/r_i\to w$, with $y_i\in\supp(\mu)$. Select a subsequence $k(i)$ such that 
    \begin{equation}\label{eq:small_r_k(i)}
    \|y_{k(i)}\|/r_i\to 0.
    \end{equation}
    We apply \eqref{eq:bound} with $r=\rho r_i$ and $y=y_{k(i)}$, together with Lemma \ref{lemma:quadratic_decay}, to obtain
    \begin{align}
    \begin{aligned}\label{eq:Delta_bound}
        \left|\frac{1}{(\rho r_i)^\alpha} \int_{B(0,\rho r_i)}\nabla\lVert\cdot\rVert^2(z)d\mu(z)\cdot y_{k(i)}\right|\le & C(\alpha)\|y_{k(i)}\|^2+\|y_{k(i)}\|^2\\
        &+\left|\frac{1}{(\rho r_i)^\alpha} \int_{B(0,\rho r_i)} \Delta(z,y_{k(i)})d\mu(z)\right|.
    \end{aligned}
    \end{align} 
    We rewrite the last term by a change of variables, see Remark \ref{changeofvariables}, as
    \begin{align*}
        \frac{1}{(\rho r_i)^\alpha} \int_{B(0,\rho r_i)} \Delta(z,y_{k(i)})d\mu(z)&=\frac{1}{\rho ^\alpha} \int_{B(0,\rho )} \Delta(z'r_i,y_{k(i)})d(r_i^{-\alpha}T_{0,r_i}\mu)(z')\\
        &=r_i^2\frac{1}{\rho ^\alpha} \int_{B(0,\rho )} \Delta\left(z',\frac{y_{k(i)}}{r_i}\right)d(r_i^{-\alpha}T_{0,r_i}\mu)(z')
    \end{align*}
    where we used the 2-homogeneity of $V(z,y)$ (see \eqref{eq:2-homogeneity}). We also rewrite the first term as
    \begin{align*}
    \frac{1}{(\rho r_i)^\alpha} \int_{B(0,\rho r_i)}\nabla\lVert\cdot\rVert^2(z)d\mu(z)\cdot y_{k(i)}&=\frac{1}{\rho^\alpha} \int_{B(0,\rho )}\nabla\lVert\cdot\rVert^2(zr_i)d(r_i^{-\alpha}T_{0,r_i}\mu)(z)\cdot y_{k(i)}\\
    &=r_i\frac{1}{\rho^\alpha} \int_{B(0,\rho )}\nabla\lVert\cdot\rVert^2(z)d(r_i^{-\alpha}T_{0,r_i}\mu)(z)\cdot y_{k(i)}
    \end{align*}
    where in the last line we used the 1-homogeneity of $\nabla \lVert\cdot\rVert^2$.
    Dividing the last chain of equalities by $r_i r_{k(i)}$ and applying \eqref{eq:Delta_bound} we obtain that
    \begin{align}
    \begin{aligned}\label{eq:blowup_bound}
        \left|\frac{1}{\rho^\alpha} \int_{B(0,\rho )}\nabla\lVert\cdot\rVert^2(z)d(r_i^{-\alpha}T_{0,r_i}\mu)(z)\cdot \frac{y_{k(i)}}{r_{k(i)}}\right|\le &  (C+1)\frac{1}{r_i r_{k(i)}} \|y_{k(i)}\|^2\\
        &+\frac{1}{r_i r_{k(i)}} r_i^2\frac{1}{\rho ^\alpha} \int_{B(0,\rho )} \Delta\left(z',\frac{y_{k(i)}}{r_i}\right)d(r_i^{-\alpha}T_{0,r_i}\mu)(z').
    \end{aligned}
    \end{align}
    We claim that the right-hand side goes to zero as $i\to \infty$. The first term is going to zero by \eqref{eq:small_r_k(i)}. Since $y_{k(i)}/r_{k(i)}\to w$, for $i$ large enough we have $\|y_{k(i)}\|\ge \tfrac12\|w\| r_{k(i)}$. The second term can thus be estimated (for $i$ large enough) by
    \[
    \frac{r_i}{r_{k(i)}}\frac{1}{\rho ^\alpha} \int_{B(0,\rho )} \Delta\left(z',\frac{y_{k(i)}}{r_i}\right)d(\nu_i)(z')\le \frac{2}{\|w\|}\frac{1}{\rho ^\alpha} \int_{B(0,\rho )} \frac{r_i}{\|y_{k(i)}\|}\Delta\left(z',\frac{y_{k(i)}}{r_i}\right)d(\nu_i)(z'),
    \]
    where $\nu_i:=r_i^{-\alpha}T_{0,r_i}\mu$. 
    We now need to possibly select a further subsequence of $k(i)$. For fixed $z'$, we know that $\frac{1}{\|w\|}\Delta(z',w)\to 0$ as $w\to0 $, and thus by dominated convergence (compare with \eqref{eq:dominated}) we know that for every fixed $i$
    \[
    \int_{B(0,\rho )} \frac{1}{\|w\|}\Delta(z',w)d(\nu_i)(z')\le \frac{1}{i}\qquad\text{for $\|w\|\le \delta(i)$ small enough.}
    \]
    We can thus select a diagonal subsequence $y_{k'(i)}$ of $y_{k(i)}$ such that $y_{k'(i)}/r_i\le \delta(i)$. With this choice we ensure that 
    \[
    \frac{1}{\rho ^\alpha} \int_{B(0,\rho )} \frac{r_i}{\|y_{k(i)}\|}\Delta\left(z',\frac{y_{k(i)}}{r_i}\right)d(\nu_i)(z')\to 0 \qquad\text{as $i\to\infty$}.
    \]
    In conclusion, the right-hand side of \eqref{eq:blowup_bound} goes to zero, while the left-hand side converges to $b_\nu(\rho)\cdot w$. This concludes the proof.
\end{proof}

The following corollary now follows immediately.
    
    \begin{corollario}[Vanishing barycenter]\label{cor:barycenter_zero}
        Let $\nu\in \Tan_\alpha(\mu,0)$. Suppose further that $\supp(\nu)$ spans $\R^n$. Then $b_\nu(\rho)=0$ for every $\rho>0$.
    \end{corollario}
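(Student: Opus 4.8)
The plan is to deduce this immediately from Proposition \ref{prop:barycenter_zero}, which already gives $b_\nu(\rho)\cdot w=0$ for every $w\in\supp(\nu)$ and every $\rho>0$. Fix $\rho>0$. Since $\supp(\nu)$ spans $\R^n$, I would choose vectors $w_1,\dots,w_n\in\supp(\nu)$ that form a basis of $\R^n$. Applying Proposition \ref{prop:barycenter_zero} to each $w_j$ yields $b_\nu(\rho)\cdot w_j=0$ for $j=1,\dots,n$, so the vector $b_\nu(\rho)\in\R^n$ is orthogonal to a spanning set, whence $b_\nu(\rho)=0$. As $\rho>0$ was arbitrary, this proves the claim.

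There is essentially no obstacle here: the entire content was already packaged into Proposition \ref{prop:barycenter_zero}, and the corollary is just the linear-algebra observation that a vector annihilating a spanning family is zero. The only minor point to be careful about is that one needs $\supp(\nu)$ to contain $n$ linearly independent vectors, which is exactly the spanning hypothesis (note $0\in\supp(\nu)$ does not help, so the spanning vectors are genuinely nonzero points of the support); this requires no further argument.
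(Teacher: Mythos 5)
Your argument is exactly the one the paper intends (the paper simply states the corollary ``follows immediately'' from Proposition \ref{prop:barycenter_zero} without writing it out). Choosing a basis of $\R^n$ inside $\supp(\nu)$ and applying the proposition to each basis vector is the correct and complete deduction.
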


    \begin{osservazione}
        It would be interesting to understand whether the conclusion $b_\nu(\rho)=0$ for every $\rho>0$ holds also without assuming that $\supp(\nu)$ spans $\R^n$. In the Euclidean case this is true, since the barycenter is obviously contained in the span of the support, but this is not necessarily the case for a general norm.
    \end{osservazione}


\section{Touching point argument}\label{sec:touching_point}

Let us denote by $\mathrm{reg}(\partial K)$ the set of points in $\partial K$ where $\lVert\cdot\rVert$ is differentiable, and for such points let us denote by $n(x):=\frac{\nabla \|x\|}{|\nabla \|x\||}$ the normal to $\partial K$ at $x$. 

We now introduce an essential notion, that we call (strict) monotonicity. This is related to finding half spaces where $\nabla \lVert\cdot\rVert^2$ always points towards the "inside" of the half space.

\begin{definizione}[Direction of monotonicity]
    Let $K$ be an origin-symmetric convex body in $\R^n$. We say that $\nu\in\mathbb{S}^{n-1}$ is a direction of \textit{(weak) monotonicity} for $K$ if for every $x\in \mathrm{reg}(\partial K)$ we have that
    \[
    x\cdot\nu>0\implies n(x)\cdot \nu \ge0.
    \]
    We say that $\nu$ is a direction of \textit{strict monotonicity} if the inequality holds with the strict sign.
    If $\norm$ is a norm in $\R^n$, we say that $\nu$ is a direction of (strict) monotonicity for $\norm$ if it is so for its unit ball $K=B_{\norm}(0,1)$.
\end{definizione}

The way in which we will use the notion of strict monotonicity is related to the following rigidity property.

\begin{lemma}[Monotonicity and flatness]\label{lemma:monotonicity_and_flatness}
    Consider a norm $\lVert\cdot\rVert$ in $\R^n$. Assume that $\mu$ is an $\alpha$-uniform measure in $\R^n$ for which 
    \[
    0\in\supp(\mu),\qquad\supp(\mu)\subseteq \{x_n\ge 0\},
    \]
    and such that $b_\mu(\rho)=0$ for every $\rho>0$. Then the following hold:
    \begin{enumerate}
        \item If $e_n$ is a direction of strict monotonicity for $\lVert\cdot\rVert$, then $\supp(\mu)\subseteq \{x_n=0\}$.
        \item If $e_n$ is a direction of weak monotonicity for $\lVert\cdot\rVert$, then, setting $V:=\{x_n=0\}$, there exists $M\ge 0$ such that $\supp(\mu)\subseteq X(0,V,M)$.
    \end{enumerate}
\end{lemma}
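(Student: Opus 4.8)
The plan is to extract from the vanishing of $b_\mu$ a pointwise sign condition on $\nabla\norm^2$ along $\supp(\mu)$, and then read off both statements. Projecting $b_\mu(\rho)=0$ onto $e_n$ gives
\[
\int_{B(0,\rho)}\nabla\norm^2(z)\cdot e_n\,d\mu(z)=0\qquad\text{for every }\rho>0,
\]
which makes sense because $\norm^2$ is differentiable $\mu$-a.e.\ (this is needed even to define $b_\mu$), and for a differentiability point $z\neq0$ we have $\nabla\norm^2(z)=2\lVert z\rVert\,\nabla\lVert z\rVert$ with $\nabla\lVert z\rVert=\nabla\lVert x\rVert$ a positive multiple of $n(x)$, where $x:=z/\lVert z\rVert\in\mathrm{reg}(\partial K)$ and $\nabla\lVert x\rVert\neq0$ by Euler's identity. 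So everything reduces to the following.

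\emph{Claim.} If $e_n$ is a direction of weak monotonicity, then $\nabla\lVert x\rVert\cdot e_n\ge0$ for every $x\in\mathrm{reg}(\partial K)$ with $x_n\ge0$; if $e_n$ is a direction of strict monotonicity, then moreover $\nabla\lVert x\rVert\cdot e_n>0$ whenever $x_n>0$.

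For $x_n>0$ the Claim is exactly the definition of monotonicity (recall $n(x)$ is a positive multiple of $\nabla\lVert x\rVert$), so only the equator $x_n=0$ needs an argument, and there I would pass to the limit from above. First, $n(x)\neq\pm e_n$: otherwise $K$ would be contained in a closed half-space with boundary $\{y_n=0\}$, contradicting $0\in\mathrm{int}(K)$. Hence the orthogonal projection $w$ of $e_n$ onto $n(x)^\perp$ satisfies $w\neq0$ and $w\cdot e_n=1-(e_n\cdot n(x))^2>0$, so the points $y_t:=(x+tw)/\lVert x+tw\rVert\in\partial K$ satisfy $y_t\to x$ and $(y_t)_n>0$ for small $t>0$. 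Consequently, for every $\eps>0$ the set $\{y\in\partial K:\ y_n>0,\ |y-x|<\eps\}$ is nonempty and relatively open in $\partial K$, hence has positive $\mathcal{H}^{n-1}$-measure, hence contains a regular point (the non-regular points of $\partial K$ are $N\cap\partial K$, which is $\mathcal{H}^{n-1}$-null since $N$ is $(n-1)$-rectifiable and dilation invariant; in the plane one may instead invoke Lemma \ref{lemma:non-differentiability}). Taking such a point for $\eps=1/i$ gives $x_i\in\mathrm{reg}(\partial K)$ with $x_i\to x$ and $(x_i)_n>0$; then $\nabla\lVert x_i\rVert\cdot e_n\ge0$ by weak monotonicity, and $\nabla\lVert x_i\rVert\to\nabla\lVert x\rVert$ by continuity of the gradient at the differentiability point $x$ (local boundedness and closedness of the subdifferential, together with $\partial(\norm)(x)=\{\nabla\lVert x\rVert\}$). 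Letting $i\to\infty$ proves the Claim.

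By the Claim, $\nabla\norm^2(z)\cdot e_n\ge0$ for $\mu$-a.e.\ $z$ (using $\supp(\mu)\subseteq\{x_n\ge0\}$, and recalling $\nabla\norm^2(0)=0$), so the displayed integral being zero for all $\rho$ forces $\nabla\norm^2(z)\cdot e_n=0$ for $\mu$-a.e.\ $z$. For (1): the strict part of the Claim gives $\nabla\norm^2(z)\cdot e_n>0$ for $\mu$-a.e.\ $z$ with $z_n>0$, hence $\mu(\{z_n>0\})=0$, and with $\supp(\mu)\subseteq\{x_n\ge0\}$ this yields $\supp(\mu)\subseteq\{x_n=0\}$. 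For (2): $\mu$ is concentrated on $G:=\{z\neq0:\ \norm\text{ differentiable at }z,\ \nabla\lVert z\rVert\cdot e_n=0\}$, so $\supp(\mu)\subseteq\overline G\cup\{0\}$, and $\overline G$ is contained in the positive cone over $\overline E$, where $E:=G\cap\partial K$. The crucial point is that $\overline E$ avoids the two points $x^\pm:=\pm e_n/\lVert e_n\rVert$ of $\partial K$: if $x_i\in E$ and $x_i\to x^+$, then differentiability of $\norm$ at $x_i$ with $\nabla\lVert x_i\rVert\cdot e_n=0$ gives, via the subgradient inequality, $\lVert x_i+te_n\rVert\ge\lVert x_i\rVert$ for all $t\in\R$, and the choice $t=-1/\lVert e_n\rVert$ gives $\lVert x_i-x^+\rVert\ge\lVert x_i\rVert\to\lVert x^+\rVert=1$, contradicting $\lVert x_i-x^+\rVert\to0$. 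Since $\overline E$ is a compact subset of $\partial K$ missing $x^\pm$, and $\{x\in\partial K:\pi_V(x)=0\}=\{x^+,x^-\}$ for $V:=\{x_n=0\}$, we obtain $\delta_0:=\min_{x\in\overline E}|\pi_V(x)|>0$; since also $|\pi_{V^\perp}(x)|\le H:=\max_{x\in\partial K}|\pi_{V^\perp}(x)|<\infty$, every $x\in\overline E$ satisfies $|\pi_{V^\perp}(x)|\le(H/\delta_0)\,|\pi_V(x)|$, i.e.\ $\overline E\subseteq X(0,V,H/\delta_0)$; by homogeneity $\overline G\subseteq X(0,V,H/\delta_0)$ as well. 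Hence $\supp(\mu)\subseteq X(0,V,M)$ with $M:=H/\delta_0$.

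The main obstacle is the Claim — specifically the treatment of boundary points on the equator $\{x_n=0\}$, where the monotonicity hypothesis says nothing directly and one must approximate by regular boundary points with positive last coordinate. The closely related pole-avoidance estimate in case (2) is the quantitative core of the argument.
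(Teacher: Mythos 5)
Your proof is correct, and it is worth recording where it agrees with and departs from the paper's argument.

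For part (1) your route matches the paper's: project $b_\mu(\rho)=0$ onto $e_n$, split the integral according to the sign of $x_n$, and use monotonicity to conclude. The one genuinely new ingredient you supply is the treatment of the equator $\{x_n=0\}$: the definition of (weak or strict) monotonicity says nothing about regular points $x\in\partial K$ with $x\cdot e_n=0$, yet both the paper's inequality $0=e_n\cdot b_\mu(\rho)\ge\frac{1}{\rho^\alpha}\int_{\{x_n>0\}}e_n\cdot\nabla\norm^2\,d\mu$ and its analogue in part (2) silently rely on $\nabla\norm^2(z)\cdot e_n\ge 0$ when $z_n=0$. Your Claim closes this gap cleanly by producing regular points $x_i\in\partial K$ with $(x_i)_n>0$ converging to any regular equatorial point, using that $n(x)\neq\pm e_n$ (since $0\in\mathrm{int}(K)$), density of regular points (via Rademacher / rectifiability of the non-differentiability set), and continuity of $\nabla\norm$ at differentiability points of a convex function.

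For part (2) you take a genuinely different route. The paper invokes Lemma \ref{lemma:quantitative_monotonicity} — a compactness statement, uniform over all unit directions $\nu$ — to deduce that $\{z:n(z)\cdot e_n=0\}$ misses the cone $C(e_n,\sigma)$ and is therefore contained in $X(0,V,1/\sigma)$. You instead argue directly and only for $\nu=e_n$: you show that the set $E=\{x\in\mathrm{reg}(\partial K):\nabla\lVert x\rVert\cdot e_n=0\}$ has closure avoiding the two poles $\pm e_n/\lVert e_n\rVert$ via the neat subgradient estimate $\lVert x_i-x^+\rVert\ge\lVert x_i\rVert=1$, and then get the cone containment with an explicit $M=H/\delta_0$ by compactness of $\overline E$ in $\partial K\setminus\{x^\pm\}$. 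This is more elementary and self-contained (no need for the uniform-in-$\nu$ statement), at the cost of being tailored to a single direction; both arguments ultimately hinge on the same subgradient inequality to avoid the poles. The proof is valid.
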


\begin{proof} 
1. For every $\rho> 0$ we can write
\begin{align*}
    0=e_n\cdot b_\mu(\rho)&=\frac{1}{\rho^\alpha}\int_{B(0,\rho)} e_n\cdot\nabla(\lVert\cdot\rVert^2) (z) d\mu(z)\\
    &=\frac{1}{\rho^\alpha}\int_{B(0,\rho)\cap\{x_n=0\}} e_n\cdot\nabla(\lVert\cdot\rVert^2) (z) d\mu(z)+\frac{1}{\rho^\alpha}\int_{B(0,\rho)\cap \{x_n>0\}} e_n\cdot\nabla(\lVert\cdot\rVert^2) (z) d\mu(z)\\
    &\ge \frac{1}{\rho^\alpha}\int_{B(0,\rho)\cap \{x_n>0\}} e_n\cdot\nabla(\lVert\cdot\rVert^2) (z) d\mu(z)
\end{align*}
    and the last integrand is strictly positive by the strict monotonicity, so the only way for the integral to be zero is that $\mu(B(0,\rho)\cap \{x_n>0\})=0$ for every $\rho>0$. It follows that $\supp(\mu)\subseteq \{x_n=0\}$.

    2. We perform the same computation as above, but we split the integral over $B(0,\rho)$ as the integral over the two sets
    \[
    \{z\in B(0,\rho):\,n(z)\cdot e_n=0\}\qquad\text{and}\qquad \{z\in B(0,\rho):\,n(z)\cdot e_n>0\}.
    \]
    By Lemma \ref{lemma:quantitative_monotonicity} there exists $M$ such that the first set is contained in $X(0,V,M)$. This concludes the proof as in Point 1.
\end{proof}

\begin{figure}
    \centering
    \def\svgscale{0.7}{
\begingroup%
  \makeatletter%
  \providecommand\color[2][]{%
    \errmessage{(Inkscape) Color is used for the text in Inkscape, but the package 'color.sty' is not loaded}%
    \renewcommand\color[2][]{}%
  }%
  \providecommand\transparent[1]{%
    \errmessage{(Inkscape) Transparency is used (non-zero) for the text in Inkscape, but the package 'transparent.sty' is not loaded}%
    \renewcommand\transparent[1]{}%
  }%
  \providecommand\rotatebox[2]{#2}%
  \newcommand*\fsize{\dimexpr\f@size pt\relax}%
  \newcommand*\lineheight[1]{\fontsize{\fsize}{#1\fsize}\selectfont}%
  \ifx\svgwidth\undefined%
    \setlength{\unitlength}{483.69437631bp}%
    \ifx\svgscale\undefined%
      \relax%
    \else%
      \setlength{\unitlength}{\unitlength * \real{\svgscale}}%
    \fi%
  \else%
    \setlength{\unitlength}{\svgwidth}%
  \fi%
  \global\let\svgwidth\undefined%
  \global\let\svgscale\undefined%
  \makeatother%
  \begin{picture}(1,0.26204958)%
    \lineheight{1}%
    \setlength\tabcolsep{0pt}%
    \put(0,0){\includegraphics[width=\unitlength,page=1]{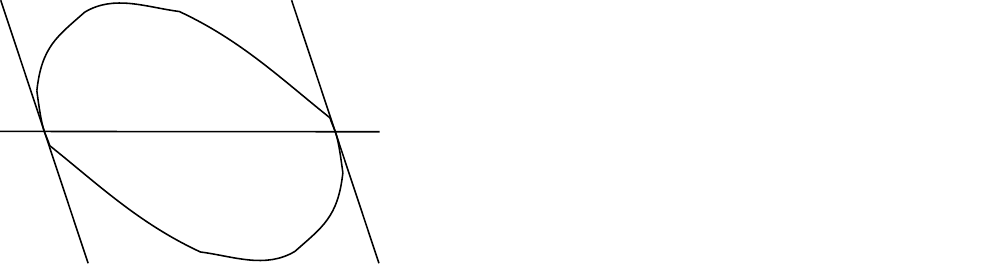}}%
    \put(0.21826878,0.05203256){\makebox(0,0)[lt]{\lineheight{1.25}\smash{\begin{tabular}[t]{l}$K$\end{tabular}}}}%
    \put(0.75458339,0.05203256){\makebox(0,0)[lt]{\lineheight{1.25}\smash{\begin{tabular}[t]{l}$AK$\end{tabular}}}}%
    \put(0,0){\includegraphics[width=\unitlength,page=2]{Monotonicity.pdf}}%
    \put(0.06244925,0.14075415){\makebox(0,0)[lt]{\lineheight{1.25}\smash{\begin{tabular}[t]{l}$p_-$\end{tabular}}}}%
    \put(0.34839103,0.14075415){\makebox(0,0)[lt]{\lineheight{1.25}\smash{\begin{tabular}[t]{l}$p_+$\end{tabular}}}}%
    \put(0.62411727,0.14075415){\makebox(0,0)[lt]{\lineheight{1.25}\smash{\begin{tabular}[t]{l}$p_-$\end{tabular}}}}%
    \put(0.91005898,0.14075415){\makebox(0,0)[lt]{\lineheight{1.25}\smash{\begin{tabular}[t]{l}$p_+$\end{tabular}}}}%
  \end{picture}%
\endgroup%
}
    \caption{Reference figure for the proof of Lemma \ref{lemma:monotonicity_linear}. The geometric idea is the following: the two supporting lines at the points $p_\pm$ can be made vertical by applying a shearing transformation that keeps the horizontal line fixed. This makes the vertical direction a direction of weak monotonicity.}
    \label{fig:monotonicity}
\end{figure}

The following lemma will be crucial in the proof of Marstrand's theorem. 

\begin{lemma}[Directions of strict monotonicity]\label{lemma:directions_of_monotonicity}
    Let $\lVert\cdot\rVert$ be a norm in $\R^2$, and let $K$ be the associated unit ball. Then 
    there exists a linear transformation $A:\R^2\to\R^2$ such that $AK$ has two independent directions of strict monotonicity.
\end{lemma}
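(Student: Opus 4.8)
The plan is to produce the linear map $A$ from the L\"owner ellipse (the minimal-area circumscribed ellipse) of $K$, and then to read off the two directions of strict monotonicity from the points where $AK$ touches a Euclidean ball. Everything reduces to one elementary observation about Euclidean-farthest points, which I would isolate first.

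\emph{Claim:} if $M\subseteq\R^2$ is an origin-symmetric convex body and $q\in\partial M$ has $|q|=\max_{x\in M}|x|$ (Euclidean norm), then $q^\perp$ is a direction of strict monotonicity for $M$. To prove it, work in coordinates with $q=|q|\,e_1$, so $M$ lies in the closed Euclidean disk of radius $|q|$ and $\pm q$ are the (unique) extreme points of $M$ in the directions $\pm e_1$. Let $x\in\mathrm{reg}(\partial M)$ with $x_2>0$, and suppose toward a contradiction that $\langle n(x),e_2\rangle\le 0$. If $\langle n(x),e_2\rangle=0$ then $n(x)=\pm e_1$, so $x$ is an extreme point of $M$ in direction $\pm e_1$, forcing $x=\pm q$ and contradicting $x_2>0$. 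If $\langle n(x),e_2\rangle<0$, use that the supporting line of $M$ at $x$ leaves $M$ — hence $0$, $q$ and $-q$ — on the side $\{z:\langle z,n(x)\rangle\le\langle x,n(x)\rangle\}$: since $0$ is interior, $x_1 n(x)_1+x_2 n(x)_2=\langle x,n(x)\rangle>0$, so $x_1$ and $n(x)_1$ are nonzero of the same sign, and combining $\langle \pm q,n(x)\rangle\le\langle x,n(x)\rangle$ (taking $+q$ if $n(x)_1>0$, $-q$ if $n(x)_1<0$) with $n(x)_2<0$ and $x_2>0$ yields $|q|<|x_1|\le|x|$, contradicting that $q$ is a farthest point. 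Hence $\langle n(x),e_2\rangle>0$, i.e. $q^\perp$ — and, by central symmetry, $-q^\perp$ — is a direction of strict monotonicity for $M$.

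With the Claim available, I would pick $A$ by means of the L\"owner--John ellipsoid. Let $E$ be the unique minimal-area ellipse with $K\subseteq E$; since $K=-K$, uniqueness gives $E=-E$, so $E$ is centred at the origin, and some linear map $A$ sends $E$ to the closed Euclidean unit disk. Then $AK$ is contained in the closed unit disk, so every point of the contact set $\mathcal C:=\partial(AK)\cap\mathbb S^{1}$ is a Euclidean-farthest point of $AK$; and, by the standard contact property of the minimal circumscribed ellipsoid (John's conditions give $\sum_i\lambda_i u_i\otimes u_i=I$ for the contact points $u_i$, which forces them to span), $\mathcal C$ spans $\R^2$. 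Choosing linearly independent $p_1,p_2\in\mathcal C$, the Claim shows that $p_1^\perp$ and $p_2^\perp$ are linearly independent directions of strict monotonicity for $AK$, which is exactly the assertion of the lemma. (If $K$ is already a Euclidean disk then $E=K$, $\mathcal C=\mathbb S^1$, and every direction is one of strict monotonicity.)

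The step that needs the most care is the Claim, and within it the case $\langle n(x),e_2\rangle<0$: this is where the defining inequality of monotonicity must be extracted from convexity alone, and it has to be set up so as to remain valid when $\partial(AK)$ has corners or line segments away from $\pm q$. The remaining ingredients — supporting lines, extreme points, and the L\"owner--John ellipsoid together with its spanning contact set — are standard, so I expect no further difficulty there.
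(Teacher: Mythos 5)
Your proof is correct, and it shares with the paper the single essential observation: if $q$ is a point of $\partial(AK)$ at maximum Euclidean distance from the origin, then $q^\perp$ is a direction of strict monotonicity for $AK$ (you state and prove this carefully as the Claim; the paper invokes it implicitly via ``$K$ is contained in the Euclidean ball $B(0,|\bar x|)$ and it follows that $\bar x^\perp$ is a direction of strict monotonicity''). Where you diverge is in how the linear map $A$ is produced. The paper proceeds by hand: take the Euclidean-farthest point $\bar x$, normalize $\bar x=e_1$, and apply a diagonal shear $A_\eps(x_1,x_2)=(x_1,\eps x_2)$, claiming that for suitable $\eps$ the new farthest point $x_\eps$ of $A_\eps K$ is not $\pm e_1$, while $e_2$ remains a direction of strict monotonicity because it is an eigenvector of the symmetric map $A_\eps$. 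You instead normalize by the L\"owner (minimal circumscribed) ellipse $E\supseteq K$: central symmetry of $K$ forces $E=-E$, a linear $A$ carries $E$ to the unit disk, every point of the contact set $\partial(AK)\cap\mathbb{S}^1$ is then a Euclidean-farthest point of $AK$, and John's contact condition $\sum_i\lambda_i\,u_i\otimes u_i=I$ guarantees the contact set spans, giving two independent farthest directions at once. The trade-off is that your route imports a standard but nontrivial external fact (the L\"owner--John contact/spanning property), whereas the paper's is entirely elementary; in exchange, your argument gets both directions simultaneously and sidesteps the delicate quantifier ``for $\eps$ small/large enough $x_\eps\ne\pm A_\eps\bar x$'', which in the paper requires some care about whether one squeezes or stretches (as written, with $\eps<1$, the farthest point of $A_\eps K$ is still $\pm e_1$, so the intended reading must be $\eps$ large, i.e.\ stretching transverse to $\bar x$). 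Your treatment of the boundary case $\langle n(x),e_2\rangle=0$ and the strict-inequality case via supporting lines is also cleaner and fully handles corners and flat pieces, which the paper leaves implicit.
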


\begin{proof}
    Let us consider a point $\bar{x}\in\partial K$ at maximum distance from the origin. Then $K$ is contained in the Euclidean ball $B(0,|\bar x|)$ and it follows that  $\bar{x}^\perp$ is a direction of strict monotonicity, so we only need to find a second one. To this aim, let us squeeze $K$ in direction $\bar{x}^\perp$. More precisely, assume for simplicity that $\bar{x}=e_1$, and let us apply the transformation 
    \[
    A_\eps(x_1,x_2)=(x_1,\eps x_2).
    \]
    We consider a point $x_\eps\in \partial (A_\eps K)$ at maximum distance from the origin. For $\eps>0$ small enough, $x_\eps$ will be distinct from $\pm A_\eps \bar x$. It follows that $x_\eps^\perp$ is also a direction of strict monotonicity, linearly independent from $A_\eps \bar x$. The set $A_\eps K$ has thus two linearly independent directions of strict monotonicity.
\end{proof}

\begin{lemma}[Directions of weak monotonicity]\label{lemma:monotonicity_linear}
    Let $\norm$ be a norm in $\R^2$ and $\ell=e^\perp$ be a line through the origin. Then there exists an invertible linear transformation $A:\R^2\to \R^2$ such that $A(\ell)=\ell$ and $e$ is a direction of weak monotonicity for the norm $\norm_A$.
\end{lemma}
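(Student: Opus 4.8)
The plan is to take $A$ to be a shear that fixes the line $\ell$ pointwise and straightens out a supporting line of $K:=B_{\norm}(0,1)$ at an endpoint of the chord $K\cap\ell$, exactly as in Figure~\ref{fig:monotonicity}. Up to replacing $\norm$ by a rotated copy of it (which only conjugates the transformation $A$ we seek by that rotation), I may assume $\ell=\{x_2=0\}$ and $e=e_2$. Since $K$ is a centrally symmetric convex body with $0$ in its interior, $K\cap\ell$ is a nondegenerate segment $[-p,p]$ with $p=ae_1$, $a>0$, and $\pm p\in\partial K$.

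The first step is to choose the shear parameter. I would note that every nonzero $n$ in the normal cone $N_K(p)$ of $K$ at $p$ has $n_1>0$: from $-p\in K$ we get $n\cdot(-p)\le n\cdot p$, hence $n_1\ge 0$, and $n_1=0$ would make $\ell$ a supporting line of $K$ at $p$, forcing $K$ into a closed half-plane bounded by $\ell$ and contradicting $0\in\mathrm{int}(K)$. Hence $\{n_2/n_1:\,0\ne n\in N_K(p)\}$ is a nonempty compact interval $[t_-,t_+]\subset\R$. I would then pick any $c\in[-t_+,-t_-]$ and set $A:=A_c$ with $A_c(x_1,x_2):=(x_1-cx_2,x_2)$; this $A$ is invertible and fixes $\ell$ pointwise, so $A(\ell)=\ell$. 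The purpose of this choice is to guarantee $(1,0)\in N_{AK}(p)$: pushing a normal $n\in N_K(p)$ forward through $A_c$ (and using $A_cp=p$) gives $A_c^{-T}n=(n_1,\,cn_1+n_2)\in N_{AK}(p)$, and choosing $n$ with $n_2/n_1=-c$ makes this a positive multiple of $(1,0)$. Equivalently $AK\subseteq\{x_1\le a\}$, and by central symmetry $AK\subseteq\{|x_1|\le a\}$; in particular $AK$ has a vertical supporting line at $p$.

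The main step is to conclude from this that $e_2$ is a direction of weak monotonicity for $AK$ --- equivalently for $\norm_A$, whose unit ball is $AK$. So let $x\in\mathrm{reg}(\partial(AK))$ with $x\cdot e_2=x_2>0$, and let $n=n(x)$ be the outward unit normal of $AK$ at $x$; I would show $n_2\ge 0$ by contradiction. If $n_2<0$, then, since $\pm p\in AK$ and $AK$ lies in the supporting half-plane $\{y:\,n\cdot y\le n\cdot x\}$, we obtain $a|n_1|\le n_1x_1+n_2x_2$; as $n_2x_2<0$ this yields $n_1x_1> a|n_1|\ge 0$, so $n_1$ and $x_1$ are nonzero with the same sign, hence $|x_1|>a$, contradicting $AK\subseteq\{|x_1|\le a\}$. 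Therefore $n(x)\cdot e_2=n_2\ge 0$, which is precisely the required weak monotonicity in the direction $e_2$.

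I expect the only genuinely delicate point to be making rigorous the geometric picture of Figure~\ref{fig:monotonicity}: the fact that a vertical supporting line of $AK$ at $p$ (equivalently $AK\subseteq\{|x_1|\le a\}$) already forces weak monotonicity of $e_2$. The contradiction argument in the previous paragraph handles exactly this, and it is robust because it only uses smoothness of $\partial(AK)$ at the single point $x$ under consideration, so no regularity of the norm $\norm$ is needed. A secondary routine point is checking that $[-t_+,-t_-]$ is a nonempty bounded interval, which follows from $0\in\mathrm{int}(K)$ and central symmetry --- these are precisely what prevent $\ell$ from being a supporting line of $K$ at $p$ and keep the normal cone $N_K(p)$ inside an open half-plane.
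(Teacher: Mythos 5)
Your proposal is correct, and the first half (shear $A_c$ that fixes $\ell$ pointwise, chosen so that $e_1$ becomes an outward normal of $AK$ at $p$, hence $AK\subseteq\{|x_1|\le a\}$) is morally the same step as the paper's choice of a linear map sending the tangent direction $Jw$ at $p_+$ to $e_2$. Where you genuinely diverge from the paper is in the verification that this slab inclusion forces weak monotonicity. The paper invokes the monotonicity of the subdifferential of the convex function $\norm_A$: comparing $x=\lambda_1 e_1+\lambda_2 e_2$ with the point $\lambda_1 e_1$ on $\ell$ (whose subgradient is $\pm e_1$ by $0$-homogeneity), the inequality $\langle\nabla\norm_A(x)-(\pm e_1),\,x-\lambda_1 e_1\rangle\ge 0$ collapses directly to $\lambda_2\langle\nabla\norm_A(x),e_2\rangle\ge 0$. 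You instead argue by contradiction from the supporting half-plane at $x$: if $n_2<0$, evaluating the half-plane inequality at $\pm p$ gives $a|n_1|\le n_1x_1+n_2x_2<n_1x_1$, forcing $|x_1|>a$ and contradicting $AK\subseteq\{|x_1|\le a\}$. This is a slightly more elementary, self-contained route (it uses only supporting half-planes of a convex body, not the monotonicity inequality for subdifferentials), at the mild cost of a contradiction argument where the paper has a one-line direct computation. One small additional bonus of your construction: your $A_c$ fixes $\ell$ pointwise and is a genuine shear, whereas the paper's $A$ only preserves $\ell$ as a set (and may additionally rescale in the $e_2$ direction); both suffice for the lemma as stated. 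Your justification that the slopes $\{n_2/n_1:0\ne n\in N_K(p)\}$ form a nonempty bounded set, via $n_1>0$ on the unit normals and compactness, is the right way to make the shear-parameter choice rigorous.
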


\begin{proof}
The more geometrically minded might take Figure \ref{fig:monotonicity} as a full proof, but below we provide the complete argument.

Up to an orthogonal transformation we can assume that $e=e_2$. 
Following the notations of Lemma \ref{lemma:directions_of_monotonicity} we recall that we set $K=B(0,1)$ and that the norm $\lVert\cdot\rVert_A$ whose ball coincides with $AK$ is defined as $\lVert x\rVert_A:=\lVert A^{-1}x\rVert$. Let $w\in \R^n\setminus\{0\}$ be such that  $\pm w\in \partial(\lVert \cdot\rVert)(p_\pm)$ where $\{p_+,p_-\}=\partial K\cap \ell$. Notice that there exists such a $w$ because of the symmetry of $\lVert \cdot\rVert$ with respect to $0$. 

Let us notice that the vectors $w$ and $e$ are linearly independent, as otherwise $K$ would be contained in the line $\ell$.
Thus we can define $A$ to be an invertible linear transformation such that 
$$A e_1=e_1\qquad\text{and}\qquad A (Jw)= e_2,\qquad\text{where}\qquad J:=\begin{pmatrix}
    0 & 1\\
    -1 & 0
\end{pmatrix}.$$

In this way $A(p_\pm)=p_\pm$ and we claim that $p_\pm+\mathrm{span}(e_2)$ are supporting affine lines to $AK$ at $p_\pm$ respectively. This is immediate from the fact that $p_\pm+\mathrm{span}(Jw)$ are by definition supporting lines to the compact set $K$ at $p_\pm$, and that the linear transformation $A$ preserves this property.
As a consequence, up to a scaling we can thus assume that $p_+=\lambda e_1$, $p_-=-\lambda e_1$, and that $e_1\in\partial\norm(p_+)$, $-e_1\in\partial\norm(p_-)$. Let us consider any $x$ in the upper half-plane, namely $x=\lambda_1 e_1+\lambda_2 e_2$, with $\lambda_2> 0$. We have two cases: $\lambda_1\ge 0$ and $\lambda_1\le 0$. 

Suppose first that $\lambda_1\ge0$. We use the monotonicity property of the subdifferential (see \eqref{eq:monotonicity_subdifferential}) of $\norm$ with respect to the points $x$ and $\lambda_1 e_1$ and its $0$-homogeneity to infer that
    \begin{align*}
        0& \le \langle \nabla \norm (x)-\nabla \norm(\lambda_1 e_1),x-\lambda_1 e_1\rangle \\
        &= \langle \nabla \norm (x)-e_1,\lambda_2 e_2\rangle\\
        &= \langle \nabla \norm (x),\lambda_2 e_2\rangle,
    \end{align*}
    from which we deduce that $\langle\nabla\norm(x),e_2\rangle\ge 0$ since $\lambda_2>0$.
If $\lambda_1\le 0$ the reasoning is similar. This concludes the proof.
\end{proof}

\begin{osservazione}\label{rmk:ellipsoid}
    We mention for reference the following result: if a norm $\norm$ in $\R^n$, $n\ge 3$, is such that for \textit{every} direction $e$ there exists a linear map $A$ that fixes $e^\perp$ and such that $e$ is a direction of weak monotonicity for $\norm_A$, then $\norm$ is an inner product norm, and the unit ball is an ellipsoid. This follows from \cite[Theorem~10.2.3]{Schneider}. This clearly fails if $n=2$, since as we just showed \textit{every} norm in $\R^2$ has this property. 
\end{osservazione}

 \begin{definizione}[Parallelograms]
    Given two independent vectors $v,w\in\R^2$ we denote by $P_{v,w}$ (also just $P$ if $v,w$ have been fixed and clear from the context) the parallelogram with sides orthogonal to $v$ and $w$, whose barycenter is $0$, and with sides of length $1$. We then denote $P_{v,w}(x,r):=x+rP_{v,w}$, and we remove the dependence from $v,w$ if they are fixed from the context. 
 \end{definizione}


\begin{definizione}[Touching parallelograms]
\label{def:touching_parallelogram}
    Given a closed set $E\subset\R^2$ and a parallelogram $P=P_{v,w}$, for every $x\in \R^2\setminus E$ we consider 
    \[
    d_P(x,E):=\sup\{r>0:\, P(x,r)\cap E=\emptyset\}
    \]
    and we call $P(x):=P(x,d_P(x,E))$ the \textit{touching parallelogram centered at $x$}.
\end{definizione}


In the final proof of Marstrand's theorem we will apply the following proposition with $E=\supp(\mu)$.

\begin{proposizione}\label{prop:touching_at_vertices}
    Fix a parallelogram $P=P_{v,w}$. Suppose that $E\subsetneq \R^2$ is a closed set, for which every touching parallelogram intersects $E$ only at the vertices. Then there exists a point $x_0\in E$, a radius $r>0$, a line $\ell$ passing through $x_0$ and a Lipschitz function $f:\R\to \R$ with $f(0)=0$ such that 
    \[
    E\cap B(x_0,r)\subseteq \{x\in \R^2: \, \langle x, v_{\ell^\perp}\rangle \leq f(\pi_\ell(x)) \}\qquad\text{and}\qquad\mathrm{gr}(f)\cap B(x_0,r)\subseteq \supp(\mu),
    \]
    where $v_{\ell^\perp}$ is a unitary vector in $\ell^\perp$.
\end{proposizione}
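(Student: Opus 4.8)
The plan is to first dispose of the parallelogram. Since $P=P_{v,w}$ depends only on the two transverse directions $v^\perp,w^\perp$, there is an invertible linear map $A$ sending these to the coordinate axes, so that $AP$ is an axis--parallel square. A linear map sends parallelograms to parallelograms, sends the touching parallelograms of $E$ relative to $P$ to the touching parallelograms of $AE$ relative to $AP$ (because $d_{AP}(Ax,AE)=d_P(x,E)$), preserves the property ``meets the set only at vertices'', and sends a graph of a Lipschitz function over a line to a graph of a Lipschitz function over a (possibly different) line, together with the side it bounds; moreover $\supp\mu$ is sent to $\supp(A_\#\mu)$. Hence, replacing $E$ by $AE$, we may assume that $P$ is the axis--parallel unit square, so that the hypothesis reads: every maximal empty axis--parallel square $Q(x)$, $x\notin E$, meets $E$ only in its four corners. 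We also assume $E\neq\emptyset$, which holds in the application $E=\supp\mu$.

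\textbf{The engine and the core claim.} The basic observation is the following: if $q\in E$ and $z\notin E$ has $q$ as its \emph{unique} $\ell^\infty$--nearest point of $E$, then $Q(z)$ meets $E$ only at $q$, so by hypothesis $q$ must be a corner of $Q(z)$, i.e.\ $|z_1-q_1|=|z_2-q_2|$; in other words, off the two diagonal lines through $q$ no point of $\R^2\setminus E$ can have $q$ as its unique nearest point. Starting from this, I would argue as follows. Pick $z\notin E$; then $Q(z)$ touches $E$ at a corner, which after reflecting the coordinates we take to be the top--right one, $p$, with $E\cap\big((p_1-2L,p_1)\times(p_2-2L,p_2)\big)=\emptyset$, where $2L$ is the side of $Q(z)$; thus $E$ locally avoids the open third quadrant based at $p$. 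The core claim is that there exist $x_0\in E$ and $r>0$ such that $E\cap B(x_0,r)$ avoids, at each of its points $q$, the open square $(q_1-r,q_1)\times(q_2-r,q_2)$, and contains no horizontal and no vertical segment. One produces $x_0$ by growing and sliding empty axis--parallel squares from $Q(z)$ towards the upper right as far as possible, each stall yielding a new maximal touching square whose corner--only contact, via the engine above, constrains the local shape of $E$; a maximality/limiting argument then delivers $x_0$ with the stated properties. The mechanism behind every constraint is the same: if $E$ contained, near $x_0$, a horizontal or vertical segment, or two points $q,q'$ with $q'$ in the open third quadrant of $q$, one could wedge a maximal empty axis--parallel square against this configuration so that its boundary meets $E$ in the interior of an edge (two points sharing one coordinate and lying on one side of the square), contradicting the hypothesis.

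\textbf{Conclusion.} Granting the core claim, pass to the coordinates $u=(x_1-x_2)/\sqrt2$, $s=(x_1+x_2)/\sqrt2$, in which the forbidden open square based at $q$ contains the downward cone of half--aperture $\pi/4$ and small height. The quadrant condition then gives $|s_p-s_q|\le|u_p-u_q|$ for all $p,q\in E\cap B(x_0,r/2)$, so $E\cap B(x_0,r/2)$ lies on the graph of a $1$--Lipschitz function over the line $\ell$ through $x_0$ in direction $(1,-1)$; the absence of horizontal and vertical segments forces this graph to be that of a strictly monotone continuous function, and one last application of the wedging argument rules out gaps, so $E\cap B(x_0,r')=\mathrm{gr}(f)\cap B(x_0,r')$ for a slightly smaller $r'$ and a Lipschitz $f$. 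Parametrizing $\ell$ so that $\pi_\ell(x_0)=0$ gives $f(0)=0$; since then $E\cap B(x_0,r')\subseteq\mathrm{gr}(f)\subseteq\{x:\langle x,v_{\ell^\perp}\rangle\le f(\pi_\ell(x))\}$ and $\mathrm{gr}(f)\cap B(x_0,r')\subseteq E=\supp(\mu)$, and since applying $A^{-1}$ returns a configuration of the same shape, the proof is complete.

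\textbf{Main obstacle.} The delicate step is the core claim: converting a ``wrong'' local configuration of $E$ into a genuine \emph{maximal} empty axis--parallel square whose contact with $E$ lies in the interior of one of its edges. One must verify that the wedged square is really maximal (not strictly contained in a larger empty square) and carry out a short case analysis according to which edges and corners come into contact, using the clearance coming from the initial touching square $Q(z)$ to guarantee enough room. Everything else — the reduction, the engine, and the passage from the quadrant condition to the Lipschitz graph — is routine.
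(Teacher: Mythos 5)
Your reduction to axis-parallel squares, the ``engine'' observation (a unique $\ell^\infty$-nearest point of $E$ must be a corner of the touching square), and the algebra passing from a third-quadrant condition to a $1$-Lipschitz graph over the diagonal are all fine. The genuine gap is the core claim: you assert, but do not prove, that there exist $x_0\in E$ and $r>0$ such that \emph{every} $q\in E\cap B(x_0,r)$ avoids the open third quadrant of size $r$ based at $q$, after a single fixed reflection of the axes. What you offer in its place --- ``growing and sliding empty axis-parallel squares ... a maximality/limiting argument'' --- is a heuristic, not a proof, and the deferred casework (checking that a wedged square is genuinely maximal, classifying edge versus corner contact) is exactly where the difficulty of the proposition is concentrated; you flag this as the delicate step yourself. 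Note also that the core claim is strictly stronger than what the statement requires: the proposition asks only for $E\cap B(x_0,r)\subseteq\{\langle x,v_{\ell^\perp}\rangle\le f(\pi_\ell(x))\}$ together with $\mathrm{gr}(f)\cap B(x_0,r)\subseteq E$, not the local equality $E=\mathrm{gr}(f)$, and the extra ``rule out gaps'' step you invoke to reach equality is also left unproved.

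The paper's proof bypasses the core claim with a one-parameter sliding construction. It normalizes so that $P$ is a square with sides along the quadrant bisectors, $0\in E$, $e_2\notin E$ and $\partial P(e_2)\cap E=\{0\}$, introduces the auxiliary center $x_\eps:=e_2+\eps e_1$, and then considers the touching squares $P(y_t)$ centered at $y_t:=(t,\tfrac12)$ for $0<t<\delta$. The top, left and right vertices of $P(y_t)$ are sheltered inside the $E$-free region $P(e_2)\cup P(x_\eps)$, and interior-of-edge contact is forbidden by hypothesis, so $\partial P(y_t)$ can meet $E$ only at its bottom vertex $(t,f(t))$. Since $t\mapsto d_P(y_t,E)$ is $1$-Lipschitz, so is $f$; maximality forces each $(t,f(t))$ into $E$, giving $\mathrm{gr}(f)\subseteq E$ locally, and the open sets $\mathrm{int}\,P(y_t)$ force $E$ to lie below the graph. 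Both required inclusions drop out at once, with no need for a global quadrant-avoidance statement or a separate gap-filling argument.
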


\begin{figure}
    \centering
    \def\svgscale{0.7}{
\begingroup%
  \makeatletter%
  \providecommand\color[2][]{%
    \errmessage{(Inkscape) Color is used for the text in Inkscape, but the package 'color.sty' is not loaded}%
    \renewcommand\color[2][]{}%
  }%
  \providecommand\transparent[1]{%
    \errmessage{(Inkscape) Transparency is used (non-zero) for the text in Inkscape, but the package 'transparent.sty' is not loaded}%
    \renewcommand\transparent[1]{}%
  }%
  \providecommand\rotatebox[2]{#2}%
  \newcommand*\fsize{\dimexpr\f@size pt\relax}%
  \newcommand*\lineheight[1]{\fontsize{\fsize}{#1\fsize}\selectfont}%
  \ifx\svgwidth\undefined%
    \setlength{\unitlength}{202.79881515bp}%
    \ifx\svgscale\undefined%
      \relax%
    \else%
      \setlength{\unitlength}{\unitlength * \real{\svgscale}}%
    \fi%
  \else%
    \setlength{\unitlength}{\svgwidth}%
  \fi%
  \global\let\svgwidth\undefined%
  \global\let\svgscale\undefined%
  \makeatother%
  \begin{picture}(1,0.88635337)%
    \lineheight{1}%
    \setlength\tabcolsep{0pt}%
    \put(0,0){\includegraphics[width=\unitlength,page=1]{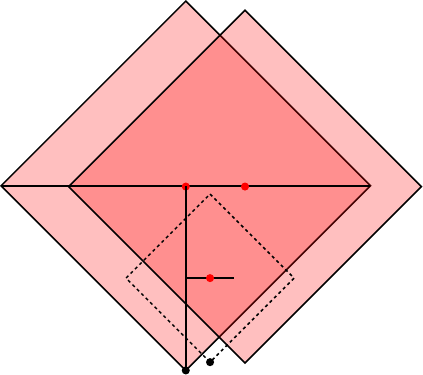}}%
    \put(0.58644349,0.49730681){\color[rgb]{0,0,0}\makebox(0,0)[lt]{\lineheight{1.25}\smash{\begin{tabular}[t]{l}$x_\eps$\end{tabular}}}}%
    \put(0.49118083,0.27689497){\color[rgb]{0,0,0}\makebox(0,0)[lt]{\lineheight{1.25}\smash{\begin{tabular}[t]{l}$y_t$\end{tabular}}}}%
  \end{picture}%
\endgroup%
}
    \caption{Reference figure for the proof of Proposition \ref{prop:touching_at_vertices}.}
    \label{fig:touching_at_vertices}
\end{figure}

\begin{proof}
Let us denote by $P(x)$ the maximal parallelogram centered at $x$. Up to affine transformations, that do not affect the problem, we can assume that $P$ is a square with sidelength $\sqrt{2}$ and sides parallel to the bisectors of the quadrants, that $0\in E$, that $e_2=(0,1)\not\in E$ and that $\partial P(e_2)\cap E=\{0\}$. The latter property can always be ensured just by considering a possibly smaller square and by rescaling. Consider now, for some small $\eps>0$, the point $x_\eps:=e_2+\eps e_1$ and the maximal parallelogram $P(x_\eps)$. It is easy to check that $P(x_\eps)=P(x_\eps,s)$ for some $1-\eps < s< 1+\eps$. 

Consider now the points $y_t:=(t,\tfrac12)$ and the corresponding parallelograms $P(y_t)$. We claim that, for some $\delta>0$, it holds that for $0<t<\delta$ the set $\partial P(y_t)\cap E$ consists of the bottom vertex only. Indeed, all other vertices are entirely contained in the interior of $P(e_2)\cup P(x_\eps)$ (pictured in red in Figure \ref{fig:touching_at_vertices}) and otherwise $E$ would intersect $P(y_t)$ (pictured with a dotted boundary) at an interior point of an edge, which is not allowed.

It follows that for every $0<t<\eps$ there is a point $(t,f(t))$ contained in $\supp(\mu)$, where $f$ is a 1-Lipschitz function, and that locally $E\subset \{(t,s):\,s\le f(t)\}$.
\end{proof}

\begin{proposizione}\label{prop:flat_support}
    Let $\lVert\cdot\rVert$ be a norm in $\R^n$ such that $e_n$ is a direction of weak monotonicity. Let moreover $\mu$ be an $\alpha$-uniform measure in $\R^n$ such that $\supp(\mu)\subseteq \{x_n\ge 0\}$ and $\{x_n=0\}\subseteq \supp(\mu)$. Then every point $z_0\in\{x_n=0\}$ admits a strong tangent cone, namely there exist $M>0$ and  $r=r(z_0)>0$ such that
    \[
    \supp(\mu)\cap B(z_0,r)\subset X(z_0,\{x_n=0\},M).
    \]
    In particular, there exist $z_0\in\{x_n=0\}$ and $\nu\in\Tan_\alpha(\mu,z_0)$ such that $\supp(\nu)\subseteq \{x_n=0\}$.
\end{proposizione}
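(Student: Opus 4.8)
The plan is to prove the two assertions in sequence, using the geometric rigidity furnished by weak monotonicity together with the density-normalization that uniform measures enjoy. For the first assertion, let $z_0\in\{x_n=0\}$. The first step is to pass to a tangent measure: pick any $\nu_0\in\Tan_\alpha(\mu,z_0)$, which exists and is again $\alpha$-uniform by Proposition \ref{uniformup}. Since $\supp(\mu)\subseteq\{x_n\ge 0\}$ and the half-space condition is preserved under the rescalings $T_{z_0,r}$ (because $z_0$ lies on the boundary hyperplane $\{x_n=0\}$), Lemma \ref{replica}(ii)--(iii) gives $\supp(\nu_0)\subseteq\{x_n\ge 0\}$ and, since $\{x_n=0\}\subseteq\supp(\mu)$, also $\{x_n=0\}\subseteq\supp(\nu_0)$. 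In particular $\supp(\nu_0)$ spans $\R^n$, so by Corollary \ref{cor:barycenter_zero} (together with Proposition \ref{prop:barycenter_zero}, noting $0\in\Tan_\alpha(\mu,z_0)$ so the barycenter hypothesis is available) we have $b_{\nu_0}(\rho)=0$ for every $\rho>0$. Now Lemma \ref{lemma:monotonicity_and_flatness}(2) applies to $\nu_0$: since $e_n$ is a direction of weak monotonicity, there is $M\ge 0$ with $\supp(\nu_0)\subseteq X(0,\{x_n=0\},M)$.

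The second step is to transfer this conical containment from the tangent $\nu_0$ back to $\mu$ near $z_0$, by a standard compactness/contradiction argument. Suppose no such $r,M$ work at $z_0$; then for every $i$ there is a point $w_i\in\supp(\mu)\cap B(z_0,1/i)$ with $|\pi_{\{x_n=0\}^\perp}(w_i-z_0)| > i\,|\pi_{\{x_n=0\}}(w_i-z_0)|$, i.e. the rescaled points $\zeta_i:=(w_i-z_0)/|w_i-z_0|$ have $|\pi_{\{x_n=0\}}(\zeta_i)|\to 0$, so after passing to a subsequence $\zeta_i\to\pm e_n$. Choosing the radii $r_i:=|w_i-z_0|\to 0$ and extracting a weakly-* convergent subsequence of $r_i^{-\alpha}T_{z_0,r_i}\mu$ to some $\nu_0\in\Tan_\alpha(\mu,z_0)$, Lemma \ref{replica}(iii) forces $\pm e_n\in\supp(\nu_0)$; but $e_n\notin X(0,\{x_n=0\},M)$ for any finite $M$, contradicting the conclusion of the first step applied to this particular tangent. (One must be mildly careful to run the first-step argument for \emph{every} element of $\Tan_\alpha(\mu,z_0)$, which is exactly what it gives, since every such tangent inherits the half-space and full-hyperplane-in-support conditions and hence vanishing barycenter.) This establishes the strong tangent cone at every $z_0\in\{x_n=0\}$.

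For the "in particular" assertion, the idea is to improve the aperture $M$ along a well-chosen sequence of points and then blow up. The cleanest route is a density-point / lower-density argument: consider the function $z_0\mapsto M(z_0)$, the infimal admissible aperture at $z_0$; one shows it is (Borel and) essentially bounded on a set of positive $\mathcal H^{n-1}$-measure of $\{x_n=0\}$ — indeed $\mathcal H^{n-1}\llcorner\{x_n=0\}$ is (up to a constant, by uniformity and $\{x_n=0\}\subseteq\supp(\mu)$) comparable to $\mu\llcorner\{x_n=0\}$, which is locally finite — and then picks a Lebesgue density point $z_0$ of a superlevel sublevel set $\{M\le M_0\}$ of the flat hyperplane. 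Blowing up at such a $z_0$ along radii realizing the density: the rescaled supports are all contained in $X(z_0,\{x_n=0\},M_0)$, but the portion of $\{x_n=0\}$ inside the rescaled ball fills up (by the density choice) all of $\{x_n=0\}\cap B(0,1)$ in the limit, while the complementary cone $X(0,\{x_n=0\},M_0)\setminus\{x_n=0\}$ receives, after rescaling, mass that is $o(1)$ relative to the $\mu$-mass of the flat slab — because that complementary region meets $\supp(\mu)\cap B(z_0,r)$ only in the "bad" set whose projected density at $z_0$ tends to $0$. Passing to the limit $\nu\in\Tan_\alpha(\mu,z_0)$ one obtains $\supp(\nu)\subseteq\{x_n=0\}$, as desired.

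The main obstacle is the last paragraph: quantifying that the "non-flat" part of $\supp(\mu)$, i.e. $\supp(\mu)\cap(X(z_0,\{x_n=0\},M_0)\setminus\{x_n=0\})$, is genuinely lower-order near a suitably chosen $z_0$, so that it disappears in the blow-up and leaves a measure supported on the hyperplane. One has to combine the uniform upper bound on $\mu$-mass of balls with the fact that $\{x_n=0\}$ already carries the "full" $(n-1)$-dimensional amount of mass; the delicate point is choosing $z_0$ (via a density argument on $\{x_n=0\}$) so that both the conical containment with a \emph{fixed} aperture holds and the flat part is asymptotically dominant. Everything else — the tangent-measure bookkeeping, the use of Lemma \ref{lemma:monotonicity_and_flatness}(2), and the compactness in Lemma \ref{replica} — is routine.
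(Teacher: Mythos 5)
Your first assertion (the strong tangent cone at every $z_0\in\{x_n=0\}$) is proved correctly and by essentially the paper's route: pass to a tangent $\nu_0\in\Tan_\alpha(\mu,z_0)$, note it inherits the half-space and full-hyperplane conditions (Lemma \ref{replica}), so it spans $\R^n$ and has vanishing barycenter (Corollary \ref{cor:barycenter_zero}), whence Lemma \ref{lemma:monotonicity_and_flatness}(2) gives the conical containment, which transfers back to $\mu$ at small scales by a standard contradiction/compactness argument. One small point worth making explicit: the aperture $M$ produced by Lemma \ref{lemma:monotonicity_and_flatness}(2) (via Lemma \ref{lemma:quantitative_monotonicity}) depends only on the norm, not on the tangent measure, which is what lets the contradiction close cleanly with a fixed $2M$. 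Your argument works even without observing this, but it is the reason the paper can state the cone with a uniform aperture.

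The second assertion is where you have a real gap. You propose a density argument on the function $z_0\mapsto M(z_0)$, \emph{the infimal admissible aperture}, and want to pick a density point of a sublevel set $\{M\le M_0\}$. But as just noted, the aperture is already uniformly bounded by a constant depending only on the norm, so $\{M\le M_0\}$ is all of $\{x_n=0\}$ for $M_0$ large enough and this sublevel set gives you nothing. The quantity that is \emph{not} uniform, and that you must control, is the \emph{radius} $r(z_0)$ below which the conical bound kicks in. The paper writes $\{x_n=0\}=\bigcup_j E_j$ with $E_j:=\{z_0:\,r(z_0)\ge 1/j\}$, finds a $j_0$ with $\H^{n-1}(E_{j_0})>0$, and takes a point $z_0$ of $\H^{n-1}\llcorner\{x_n=0\}$-density $1$ for $E_{j_0}$. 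At such a point, for small $r$ the set $E_{j_0}$ is $\delta r$-dense in $\{x_n=0\}\cap B(z_0,Rr)$, and \emph{this deneseness}, combined with the cone condition at each $x\in E_{j_0}$, pins every support point of $\mu$ in $B(z_0,Rr)$ into the slab $B(\{x_n=0\},C\delta r)$: given $w\in\supp(\mu)\cap B(z_0,Rr)$, pick $x\in E_{j_0}$ within $\delta r$ of its projection onto $\{x_n=0\}$, apply the cone condition at $x$, and conclude $\mathrm{dist}(w,\{x_n=0\})\le 2M\delta r$. This is a direct thin-slab containment; passing to the blow-up then gives $\supp(\nu)\subseteq\{x_n=0\}$. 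Your alternative sketch --- arguing that the non-flat part carries $o(1)$ mass relative to the flat part --- does not obviously close: by $\alpha$-uniformity, any support point staying at distance $\gtrsim r$ from the hyperplane carries mass $\gtrsim r^\alpha$ at scale $r$, which is \emph{comparable}, not negligible. So a mass-negligibility argument cannot work unless you have already shown the geometric thin-slab containment, at which point the measure comparison is superfluous. Finally, your claim that $\H^{n-1}\llcorner\{x_n=0\}$ and $\mu\llcorner\{x_n=0\}$ are comparable is not justified (and is false unless $\alpha=n-1$); the paper's argument uses $\H^{n-1}\llcorner\{x_n=0\}$ alone and does not need any such comparability.
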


\begin{proof}
    Let us set for simplicity $V:=\{x_n=0\}$. Consider any $z_0\in V$ and any $\nu\in\Tan_\alpha(\mu,z_0)$. It is clear by Lemma \ref{replica} that we still have
    \[
    V\subseteq \supp(\nu).
    \]
    If $\supp(\nu)=V$ then we are done. Otherwise it means that $\mathrm{span}(\supp(\nu))=\R^n$, hence by Corollary \ref{cor:barycenter_zero} we deduce that $b_\nu(\rho)=0$ for every $\rho>0$. By Lemma \ref{lemma:monotonicity_and_flatness}, Point 2, we conclude that $\supp(\nu)\subseteq X(0,V,M)$ for some $M>0$. Now taking into account the $\alpha$-uniformity of $\mu$, this implies that there exists $r(z_0)>0$ such that 
    \begin{equation}\label{eq:strong_cone}
        \supp(\mu)\cap B(z_0,r)\subset X(z_0,V,2M)\qquad\text{for every $0<r\le r(z_0)$.}
    \end{equation}
    Indeed, if this were not the case we could find a tangent measure $\nu\in\Tan_\alpha(\mu,z_0)$ whose support is not contained in $X(0,V,M)$, contradiction.

    Let now, for $j\in\mathbb{N}^+$,
    \[
    E_j:=\{x\in V:\, r(x)\ge j^{-1}\}.
    \]
    Then $V=\bigcup_j E_j$, each $E_j$ is closed (and thus $\mu$-measurable) and therefore
    by sigma-additivity there must exist $j_0\in\mathbb{N}$ such that $\H^{n-1}(E_{j_0})>0$. Let us consider a point $z_0$ of density 1 for the set $E_{j_0}$ with respect to the measure $\H^{n-1}\llcorner V$. We claim that any tangent $\nu\in\Tan_\alpha(\mu,z_0)$ satisfies $\supp(\nu)\subseteq \{x_n=0\}$. This directly follows from the following fact: for every $\delta>0$ there is $r(\delta)>0$ such that $\supp(\mu)\cap B(z_0,r)\subseteq I_{\delta r}(V)$ for every $0<r<r(\delta)$, where $I_\eps(V):=\{x:\,\dist(x,V)<\eps\}$ denotes the $\eps$-neighborhood of $V$.

    Let us prove the latter fact. From the density property of $z_0$ for every $R>0$ and every $\delta>0$ there exists a $\rho_0>0$ such that for every $r\leq \rho_0$ we have 
    $$\frac{\mathcal H^{n-1} \llcorner V(B(z_0,Rr)\cap E_j)}{ \mathcal H^{n-1}\llcorner V(B(0,1)) R^{n-1}r^{n-1}}\geq 1-R^{-(n-1)}\delta^{n-1}.$$
    Thanks to this choice, we know that in the ball $B(z_0,Rr)$, the set $E_j\cap B(z_0,Rr)$ must be $\delta  r$-dense (namely, its $\delta r$-neighborhood contains the ball $B(z_0,Rr)$). This implies in particular, thanks to \eqref{eq:strong_cone}, that
    $$\supp\mu\cap B(z_0,R r)\subseteq B(V,\delta r),$$
    which in turn implies that 
    $$\supp(r^{-\alpha}T_{z_0,r}\mu)\cap B(0,R)\subseteq B(0,R)\cap B(V,\delta ).$$
    Thanks to the arbitrariness of $R$ and of $\delta$, this concludes the proof of our claim and of the proof.
\end{proof}

\section{Proof of Marstrand's theorem}\label{sec:proof}

We finally put together all the ingredients to obtain the full proof of Theorem \ref{thm:main}.

\begin{enumerate}
    \item[(i)] \textbf{Setup.} Let us assume by contradiction that there exists $\alpha\in \mathbb{R}\setminus\mathbb{N}$ and a non-trivial measure $\mu$ in $\R^2$, equipped with a norm $\lVert\cdot\rVert$, such that 
    \[
    \lim_{r\to 0}\frac{\mu(B(x,r))}{r^\alpha}
    \]
    exists positive and finite at $\mu$-a.e. point. By Proposition \ref{prop:tangents_are_uniform}, by taking a tangent measure at a typical point we can assume that $\mu$ is $\alpha$-uniform. By Remark \ref{rmk:not_everything} and Remark \ref{rmk:uniform<1} we can restrict to the case $\alpha\in(1,2)$. Our goal will be showing that, by taking tangent measures at specific points, we can find a non-trivial $\alpha$-uniform measure that is supported on a line, which is a contradiction again by Remark \ref{rmk:not_everything}. Let us recall that by Proposition \ref{uniformup}, tangents at \emph{every  point } of the support of an $\alpha$-uniform measure exist and are $\alpha$-uniform measures.
    
    
    \item[(ii)] \textbf{Differentiability of the norm.} In the following we can assume without loss of generality that the norm $\lVert\cdot\rVert$ is differentiable at $\mu$-a.e. point, for every measure $\mu$ that we will consider (and thus that the nonlinear barycenter \eqref{eq:nonlinear_barycenter} is well-defined). Indeed, by Lemma \ref{lemma:non-differentiability} we know that the set $N$ of non-differentiability of $\lVert\cdot\rVert$ is contained in a countable union of lines through the origin. If $\mu(N)>0$ then at least one of these lines, call it $\ell$, satisfies $\mu(\ell)>0$. Thus by taking a tangent at a density point of $\mu\llcorner \ell$ we would obtain an $\alpha$-uniform measure supported on a line, and this would be a contradiction.
    
    \item[(iii)] \textbf{Touching point argument.} By Remark \ref{rmk:not_everything} the support of $\mu$ is a closed set whose complement is non-empty. By Lemma \ref{lemma:directions_of_monotonicity} we can find a linear change of variables $A:\R^2\to\R^2$ such that the norm $\lVert\cdot\rVert_A$ admits two independent directions of strict monotonicity $v,w$. Let us consider parallelograms whose sides are alternatively orthogonal to $v$ and $w$, and all of the same length. 
    Let us consider for every point $x\in\R^2\setminus\supp(\mu)$ the touching parallelogram centered at $x$. There are two cases:
    \begin{enumerate}
        \item either there exists a point $x\in \R^2\setminus \supp(\mu)$ such that the touching parallelogram centered at $x$ intersects $\supp(\mu)$ in at least one non-vertex point $z$;
        \item or for every point $x\in\R^2\setminus \supp(\mu)$ the touching parallelogram centered at $x$ intersects $\supp(\mu)$ only at the vertices.
    \end{enumerate}

    \begin{enumerate}
    
    \item[(iii.a)] 
    Let us take a tangent measure $\nu\in \Tan_\alpha(\mu,z)$. Up to an isometry, and without loss of generality, we can assume that $v=e_2$ and that $z$ belongs to the side orthogonal to $e_2$, and thus that $\supp(\nu)\subseteq \{x_2\ge 0\}$ and $0\in\supp(\nu)$. Let us consider a further tangent measure $\eta\in\Tan_\alpha(\nu,0)$. The span of $\supp(\eta)$ must be the whole plane (otherwise $\eta$ would be an $\alpha$-uniform measure supported on a line, $\alpha\in(1,2)$, a contradiction). It follows from Corollary \ref{cor:barycenter_zero} that $b_\eta(\rho)=0$ for every $\rho> 0$. By the strict monotonicity of the direction $e_2$, from Lemma \ref{lemma:monotonicity_and_flatness} it descends that $\supp(\eta)\subseteq \{x_2=0\}$, hence we find a contradiction anyway.
    
    \item[(iii.b)] 
    In this case by Proposition \ref{prop:touching_at_vertices} we can find a point $x_0\in \supp(\mu)$ and a radius $r>0$ such that $\supp(\mu)\cap B(x_0,r)$ is contained in the subgraph of a Lipschitz function $f$ (with respect to suitable axes), and moreover such that the graph of $f$, $\mathrm{gr}(f)$, satisfies $\mathrm{gr}(f)\cap B(x_0,r)\subseteq \supp(\mu)$. We now take a tangent measure at a point $z_0\in \mathrm{gr}(f)\cap B(x_0,r)$ that corresponds to a differentiability point of $f$, which exist by Rademacher's theorem. We end up with an $\alpha$-uniform measure $\nu\in \Tan_\alpha(\mu,z_0)$ that, up to rigid motions, satisfies the following:
    \begin{equation}\label{eq:support_on_one_side}
    \supp(\nu)\subseteq \{x_2\ge 0\},\qquad \{x_2=0\}\subseteq \supp(\nu).
    \end{equation}
    Observe now that $e_2$ is not necessarily a direction of weak monotonicity for $\norm$; however by {Lemma \ref{lemma:monotonicity_linear}} we can consider a linear transformation $A:\R^2\to \R^2$ that preserves $\{x_2=0\}$ and for which $e_2$ becomes a direction of weak monotonicity for $\norm_A$. The measure $\nu_A:=A_\# \nu$ is $\alpha$-uniform with respect to the norm $\lVert \cdot\rVert_A$  and still satisfies 
    \eqref{eq:support_on_one_side}. By taking a further tangent measure we end up with $\eta\in \Tan_\alpha(\nu,0)$ that also satisfies $b_\eta(\rho)=0$ for every $\rho>0$ by Corollary \ref{cor:barycenter_zero} (again, if the span of $\supp(\eta)$ is not $\R^2$ we are done anyway). By Proposition \ref{prop:flat_support} we can find yet another tangent measure $\theta\in\Tan_\alpha(\eta,z_0)$, for some $z_0\in \{x_2=0\}$, such that $\supp(\theta)\subseteq \{x_2=0\}$. This is again a contradiction, hence also this case is not possible. The proof is concluded.
    \end{enumerate}
\end{enumerate}

\printbibliography

\end{document}